\newlist{enumarabic}{enumerate}{1}
\setlist[enumarabic]{font=\normalfont,label=(\arabic*),leftmargin=0.3in}
\newlist{enumroman}{enumerate}{1}
\setlist[enumroman]{font=\normalfont,label=(\roman*),leftmargin=0.3in}
\numberwithin{equation}{section}
\theoremstyle{plain}
\newtheorem{theorem}{Theorem}[section]
\newtheorem{proposition}[theorem]{Proposition}
\newtheorem{lemma}[theorem]{Lemma}
\newtheorem{corollary}[theorem]{Corollary}
\theoremstyle{definition}
\newtheorem{remark}[theorem]{Remark}
\newtheorem{example}[theorem]{Example}
\theoremstyle{remark}
\newtheorem*{acknowledgements}{Acknowledgements}
\let\newterm\emph
\def\arxiv#1{\href{http://arxiv.org/abs/#1}{\texttt{arXiv:#1}}}
\def\cf{\emph{cf.}}
\DeclareMathAlphabet\mathbfit{OML}{cmm}{b}{it}
\let\epsilon\varepsilon
\let\phi\varphi
\let\emptyset\varnothing
\def\N{\mathbb N}
\def\Z{\mathbb Z}
\def\R{\mathbb R}
\def\C{\mathbb C}
\def\RP{\mathbb{RP}}
\DeclareMathOperator{\Tor}{Tor}
\DeclareMathOperator{\supp}{supp}
\DeclareMathOperator*{\colim}{colim}
\def\deg#1{|#1|}
\def\pair#1{\langle#1\rangle}
\def\bigpair#1{\bigl\langle#1\bigr\rangle}
\def\XXX{\mathbfit{X}}
\def\ZZZ{\mathbfit{Z}}
\let\shuffle\nabla
\def\ZK{\mathcal{Z}}
\def\kk{\Bbbk}
\def\SRC#1{\kk\langle#1\rangle}
\def\FU#1{\mathcal{F}(#1)}
\def\restr#1#2{#1_{|#2}}
\def\CC{C}
\def\cc{o}
\def\DJ{DJ}
\def\KK{\mathbf{K}}
\def\AW{AW}
\def\BB{\tilde{A}}
\def\bG{\bar G}
\def\tN{\tilde N}
\def\hsigma{\hat\sigma}
\def\hSigma{\hat\Sigma}
\def\hN{\hat N}
\def\tXXX{\boldsymbol{\tilde X}}
\def\tYYY{\boldsymbol{\tilde Y}}
\def\Ri{\R^{\times}}
\def\GGG{\mathbfit{G}}
\def\KKK{\mathbfit{K}}
\def\LLL{\mathbfit{L}}
\def\TTT{\mathbfit{T}}
\def\YYY{\mathbfit{Y}}
\def\ZZZ{\mathbfit{Z}}
\def\bGGG{\boldsymbol{\bar G}}
\def\tKKK{\boldsymbol{\tilde K}}
\def\tLLL{\boldsymbol{\tilde L}}
\def\tLLL{\vphantom{\LLL}\smash{\boldsymbol{\tilde L}}}
\def\GGGG{\mathcal{G}}
\def\KKKK{\mathcal{K}}
\def\LLLL{\mathcal{L}}
\def\GG{G}
\def\posub#1{\bar{o}_{#1}}
\def\posup#1{\bar{o}^{\,#1}}
\def\timesunder#1{\mathbin{\mathchoice
  {\mathop\times\limits_{\mkern-5mu #1\mkern-5mu}}%
  {\times_{#1}}{\times_{#1}}{\times_{#1}}}}
\def\timesover#1{\mathbin{\mathchoice
  {\mathop\times\limits^{\mkern-3mu #1\mkern-5mu}}%
  {\times^{#1}}{\times^{#1}}{\times^{#1}}}}
\def\nn#1{\bar#1}
\begin{document}

\title[The cohomology rings of real toric spaces]{The cohomology rings of real toric spaces\\and smooth real toric varieties}
\author{Matthias Franz}
\thanks{The author was supported by an NSERC Discovery Grant.}
\address{Department of Mathematics, University of Western Ontario,
  London, Ont.\ N6A\;5B7, Canada}
\email{mfranz@uwo.ca}

\hypersetup{pdfauthor=\authors,pdftitle={The cohomology rings of real toric spaces and smooth real toric varieties}}

\subjclass[2020]{Primary 14M25, 57S12; secondary 14F45, 55M35}

\begin{abstract}  
  We compute the cohomology rings of smooth real toric varieties and of real toric spaces,
  which are quotients of real moment-angle complexes by freely acting subgroups of the ambient \(2\)-torus.
  The differential graded algebra we present is in fact an equivariant dga model, valid for arbitrary coefficients.
  We deduce from our description that smooth toric varieties are M-varieties.
\end{abstract}

\maketitle

\section{Introduction}

Let \(\Sigma\) be a simplicial complex on the vertex set~\([m]=\{1,\dots,m\}\), possibly with ghost vertices.
The \(2\)-torus~\(G=(\Z_{2})^{m}\) acts canonically on the real moment-angle complex
\begin{equation}
  Z_{\Sigma} = \ZK_{\Sigma}(D^{1},S^{0}) = \bigcup_{\sigma\in\Sigma} Z_{\sigma} \subset (D^{1})^{m}
\end{equation}
where \(D^{1}=[-1,1]\) is the interval with boundary~\(S^{0}=\{\pm1\}\), and the exponents in
\begin{equation}
  Z_{\sigma} = (D^{1})^{\sigma}\times (S^{0})^{[m]\setminus\sigma}
\end{equation}
indicate the factors of the Cartesian products. The quotient~\(X_{\Sigma} = Z_{\Sigma}/K\)
by a freely acting subgroup~\(K\subset G\) is called a \newterm{real toric space} \cite{ChoiKajiTheriault:2017},~\cite{ChoiPark:2017a}.
It is the topological counterpart of a smooth real toric variety.

More precisely, let \(\Sigma\) be a regular rational fan in~\(\R^{n}\) with \(m\)~rays,
which we may also consider as a simplicial complex on the vertex set~\([m]\).
The associated real toric variety~\(\XXX_{\Sigma}=\XXX_{\Sigma}(\R)\)
is the fixed point set of the canonically defined complex conjugation
on the complex toric variety~\(\XXX_{\Sigma}(\C)\).
If the primitive generators of the rays in~\(\Sigma\) span the lattice~\(\Z^{n}\subset\R^{n}\),
then there is a real toric space~\(X_{\Sigma}\) as defined above
that is a strong deformation retract of the real toric variety~\(\XXX_{\Sigma}\).
For an arbitrary regular fan this statement has to be modified slightly, see~\Cref{thm:rts-toric-retraction}.

Jurkiewicz~\cite{Jurkiewicz:1985} has determined the mod~\(2\) cohomology rings of smooth real projective toric varieties,
and Davis--Januszkiewicz~\cite{DavisJanuszkiewicz:1991} those of their topological analogues, small covers (over a polytope),
compare \Cref{rem:equivformal}.
The rational Betti numbers of these spaces have been computed by Suciu--Trevisan~\cite{Suciu:2013}, \cite[Sec.~VI.2]{Trevisan:2012},
and their integral cohomology groups by Cai--Choi~\cite[Thm.~1.1]{CaiChoi} (more generally for shellable~\(\Sigma\)).
The cohomology ring of a real moment-angle complex has been calculated by Cai~\cite[p.~514]{Cai:2017}
for any coefficient ring~\(\kk\).
Choi--Park~\cite[Thm.~4.5]{ChoiPark:2017},~\cite[Main Thm.]{ChoiPark:2017a} have determined
the cohomology ring of any real toric space under the assumption that \(2\)~is invertible in~\(\kk\).
The special role of the prime~\(p=2\) was already pointed out by Choi--Kaji--Theriault~\cite[p.~155]{ChoiKajiTheriault:2017}
in their study of the \(p\)-local homotopy type of real toric spaces.

In this note we describe
the cohomology rings of real toric spaces and smooth real toric varieties with arbitrary coefficients.
Apart from generalizing the results mentioned above,
this also complements the author's computation~\cite{Franz:torprod} of the cohomology rings
of smooth (complex) toric varieties as well as those of quotients of (complex) moment-angle complexes
by freely acting closed subgroups of the ambient torus~\(T=(S^{1})^{m}\).

For any commutative ring~\(\kk\) with unit
we write \(\kk[\Sigma]\) for the anti-commutative face algebra of~\(\Sigma\). It is generated
by indeterminates~\(t_{1}\),~\dots,~\(t_{m}\) of degree~\(1\) with relations
\begin{equation}
  t_{j}\,t_{j'} = -t_{j'}\,t_{j}
  \qquad\text{and}\qquad
  t_{j_{1}}\cdots t_{j_{k}} = 0
\end{equation}
for~\(j\ne j'\) and any non-face~\(\{j_{1},\dots,j_{k}\}\notin\Sigma\).

Fixing an isomorphism~\(L\cong(\Z_{2})^{n}\), we can encode the quotient map~\(G\to L\)
by an \((n\times m)\)-matrix~\((x_{ij})\) with coefficients in~\(\Z_{2}\).
This gives well-defined coordinate functions~\(s_{1},\dots,s_{n}\colon L\to\{0,1\}\subset\kk\),
which generate the algebra~\(\FU{L}\) of all function~\(L\to\kk\) as a unital \(\kk\)-algebra.

We define a differential graded algebra (dga) structure on the tensor product
\begin{equation}
  A(\Sigma,L) = \kk[\Sigma]\otimes_{\kk}\FU{L}
\end{equation}
as follows: Both \(\kk[\Sigma]\) and~\(\FU{L}\) are subalgebras.
There are the additional relations
\begin{equation}
  \label{eq:rel-si-tj}
  s_{i}\,t_{j} =
  \begin{cases}
    t_{j}\,s_{i} & \text{if \(x_{ij}=0\),}\\
    t_{j}\,(1-s_{i}) & \text{if \(x_{ij}=1\)}
  \end{cases}
\end{equation}
for all~\(i\) and~\(j\),
and the differential is determined by
\begin{equation}
  \label{eq:intro:diff-si-tj}
  d\,t_{j} = -2\,t_{j}^{2}
  \qquad\text{and}\qquad
  d\,f = \sum_{j=1}^{m} (f\,t_{j}-t_{j}\,f)
\end{equation}
for~\(f\in\FU{L}\).
The group~\(L\) acts on~\(A(\Sigma,L)\) via its action on functions~\(L\to\kk\).

Note that for~\(\kk=\Z_{2}\) we recover the usual Stanley--Reisner ring of~\(\Sigma\)
with commuting generators of degree~\(1\) and no differential. The remaining relations become
\begin{equation}
  \label{eq:rel-mod2}
  s_{i}\,t_{j}=t_{j}\,s_{i} + x_{ij}\,t_{j}
  \qquad\text{and}\qquad
  d\,s_{i} = \sum_{j=1}^{m} x_{ij}\,t_{j}.
\end{equation}

\begin{theorem}
  \label{thm:main}
  The dga~\(A(\Sigma,L)\) is quasi-isomorphic to the singular cochain algebra~\(C^{*}(X_{\Sigma};\kk)\).
  The quasi-iso\-mor\-phism is \(L\)-equivariant and natural with respect to the inclusion of subcomplexes of~\(\Sigma\).
  In particular, there is an isomorphism of graded algebras
  \begin{equation*}
    H^{*}(X_{\Sigma};\kk) = H^{*}(A(\Sigma,L)).
  \end{equation*}
\end{theorem}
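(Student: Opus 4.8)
The strategy is to build the quasi-isomorphism through an intermediate \(G\)-equivariant model for the real moment-angle complex \(Z_\Sigma\) and then pass to the \(K\)-quotient. First I would recall (or construct) a \(G\)-equivariant dga model for \(Z_\Sigma\): since \(Z_\Sigma=\ZK_\Sigma(D^1,S^0)\) is a polyhedral product glued from the pieces \(Z_\sigma=(D^1)^\sigma\times(S^0)^{[m]\setminus\sigma}\), and each factor \(S^0\) has \(C^*(S^0;\kk)=\FU{\Z_2}\) with the convolution-type differential \(d f=f\,t-t\,f\) and \(d t=-2t^2\) coming from the standard small model of \(\Z_2\cong S^0\), one assembles these by a colimit over the face poset of \(\Sigma\). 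This yields a \((\Z_2)^m\)-equivariant dga \(\tilde A(\Sigma)=\kk[\Sigma]\otimes\FU{(\Z_2)^m}\) with the evident relations \eqref{eq:rel-si-tj} for the full group \(G=L\times(\text{complement})\), together with a natural quasi-isomorphism \(\tilde A(\Sigma)\xrightarrow{\sim}C^*(Z_\Sigma;\kk)\); naturality in \(\Sigma\) is built in because everything is a colimit of the face contributions. The subtlety here, and the first real point to nail down, is commuting the colimit past cohomology, i.e.\ a Mayer--Vietoris / cofibrant-replacement argument showing that the homotopy colimit of the local models computes \(C^*(Z_\Sigma)\) on the nose as an algebra; one controls this by an induction on the number of simplices of \(\Sigma\), using that each \(Z_\sigma\) is \(G\)-equivariantly contractible onto a subtorus and that the maps in the diagram are (split) cofibrations.

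Next I would descend to \(X_\Sigma=Z_\Sigma/K\). Because \(K\) acts freely, \(X_\Sigma=Z_\Sigma\times_K EK\) and \(C^*(X_\Sigma;\kk)\) is quasi-isomorphic to the \(K\)-homotopy-fixed-point (equivalently, since the action is free, the \(K\)-invariants of a suitable model) of the equivariant model of \(Z_\Sigma\). Concretely, \(\FU{G}\) as a \(G\)-module splits as \(\FU{G/K}\otimes\FU{K}=\FU{L}\otimes\FU{K}\), and the \(K\)-fixed dga of \(\tilde A(\Sigma)\) — taken in the derived sense, i.e.\ \((\tilde A(\Sigma)\otimes\FU{K})^K\) or equivalently the totalization of the cobar construction for \(K\) — collapses because \(\FU{K}\) is the function algebra of a \emph{free} orbit, so its homotopy fixed points are just \(\kk\). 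What survives is exactly \(\kk[\Sigma]\otimes\FU{L}=A(\Sigma,L)\) with the stated relations and differential: the relation \eqref{eq:rel-si-tj} and \(d f=\sum_j(f t_j-t_j f)\) are precisely the images of the \(G\)-equivariant relations under \(G\twoheadrightarrow L\), and \(d t_j=-2t_j^2\) is unchanged. The \(L\)-equivariance of the resulting map is automatic since \(L=G/K\) acts throughout; naturality in subcomplexes of \(\Sigma\) is inherited from the \(Z_\Sigma\)-level statement.

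The main obstacle I anticipate is \emph{not} the formal descent but the bookkeeping that makes the non-commutative multiplication in \eqref{eq:rel-si-tj} compatible with the cochain-level product — i.e.\ verifying that the small model for \(S^0\) with its skew relations \(s t=t(1-s)\), \(d t=-2t^2\) really is a dga quasi-isomorphic to \(C^*(S^0;\kk)=C^*(\Z_2;\kk)\) as an algebra over arbitrary \(\kk\), and that the Alexander--Whitney / shuffle-type product on the polyhedral product respects the gluing. (This is where the paper's \(\AW\) and \(\shuffle\) notation presumably enters.) Over \(\kk=\Z_2\) this is classical and reduces to the Stanley--Reisner picture as noted after \eqref{eq:rel-mod2}; the work is to carry the sign- and coefficient-sensitive version through. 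Once the local quasi-isomorphism \(\FU{\Z_2}\xrightarrow{\sim}C^*(S^0;\kk)\) is established as a natural dga map, the colimit and \(K\)-quotient steps are robust, and the theorem follows, with the algebra isomorphism \(H^*(X_\Sigma;\kk)=H^*(A(\Sigma,L))\) an immediate consequence.
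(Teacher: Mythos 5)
There is a genuine gap, and it sits exactly where the paper's real work is done. Your first step asks to ``recall (or construct)'' a natural \(G\)-equivariant dga quasi-isomorphism \(\kk[\Sigma]\otimes\FU{G}\simeq C^{*}(Z_{\Sigma};\kk)\); but this is precisely the case \(L=G\) of the theorem being proved, so nothing can be recalled, and your sketch of the construction is off in two ways. First, the local models are misidentified: \(C^{*}(S^{0};\kk)=\FU{\Z_{2}}\) is concentrated in degree \(0\) with zero differential, so the degree-one generator \(t_{j}\) with \(d\,t_{j}=-2t_{j}^{2}\) cannot ``come from'' the \(S^{0}\)-factors; in the paper it arises from \(C(B\CC)\) and \(C(E\CC)\), i.e.\ only after \(X_{\Sigma}\) has been replaced, via the Borel-construction zigzag (\Cref{thm:homotopy-DJ}, \Cref{thm:X-Y}, using \Cref{eq:equiv-koszul}), by \(Y_{\Sigma,L}=\DJ_{\Sigma}\timesover{BL}EL=\ZK_{\Sigma}(E\CC,\CC)/K\), whose building blocks \(E\CC\) and \(\CC\) carry \emph{free} \(\CC\)-actions. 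The factor \(D^{1}\) with the flip is not equivariantly equivalent to \(E\CC\), so some such replacement (or at least a chosen equivariant map of pairs \((E\CC,\CC)\to(D^{1},S^{0})\)) is indispensable and absent from your argument. Second, the multiplicative assembly is not a bookkeeping afterthought: \(C^{*}\) of a product is not the tensor product of the \(C^{*}\)'s as a dga, and the paper resolves this by working throughout with the chain \emph{coalgebras} \(M(\Sigma,L)\cong\SRC{\Sigma}\otimes\kk[L]\), using that the shuffle map is a dgc morphism (\Cref{thm:M-Y}), and only dualizing at the end; you name this as an ``anticipated obstacle'' but do not address it.

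The descent step also does not work as written. The expression \((\tilde A(\Sigma)\otimes\FU{K})^{K}\) with the diagonal action is not the derived \(K\)-fixed points (it simply returns \(\tilde A(\Sigma)\)), and the proposed ``collapse because \(\FU{K}\) is the function algebra of a free orbit'' cannot be applied factorwise: under a splitting \(\FU{G}\cong\FU{L}\otimes\FU{K}\) the differential \(d\,f=\sum_{j}(f\,t_{j}-t_{j}\,f)\) and the commutation relations mix the \(K\)- and \(L\)-directions through the \(t_{j}\)'s, so \(\tilde A(\Sigma)\) is not a dga tensor product of a trivial \(K\)-dga with \(\FU{K}\). A corrected version of your route would have to prove (i) that \(C^{*}(X_{\Sigma})\simeq(C^{*}(Z_{\Sigma}))^{K}\) multiplicatively, \(L\)-equivariantly and naturally (using freeness of the \(K\)-action), (ii) that strict invariants compute derived invariants on both sides (e.g.\ because \(\FU{G}\) and the cochains of a free \(K\)-space are degreewise coinduced \(\kk[K]\)-modules), and (iii) that the equivariant quasi-isomorphism of step one survives taking \(K\)-invariants; none of these appear in the proposal. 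The paper sidesteps all of (i)--(iii) by making the \(K\)-action free at the simplicial level, so that \(C(Y_{\Sigma})/K=C(Y_{\Sigma,L})\) holds strictly, and then identifies the dual of \(M(\Sigma,L)\) with \(A(\Sigma,L)\) by the explicit computation of the third step.
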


The analogous statement holds for all smooth real toric varieties (\Cref{thm:model-toric}),
and it implies that for~\(\kk=\Z_{2}\) the cohomology of these spaces can additively be expressed as a torsion product
involving \(\kk[\Sigma]\), see \Cref{thm:iso-Tor}.

\medbreak

Recall that a complex variety~\(\XXX\) with an antiholomorphic involution~\(\tau\) is called maximal
(or an M-variety) if the mod~\(2\) Betti sum of the real locus~\(\XXX^{\tau}\) is as large as allowed by Smith theory,
that is, equal to the corresponding sum for~\(\XXX\), compare~\cite[Prop.~7.3.7]{Hausmann:2014}.
This generalizes the classical notion of M-curves introduced by Harnack.
Bihan--Franz--McCrory--van Hamel~\cite[Thm.~1.1]{BihanEtAl:2006} have shown
that a toric variety is maximal if it is smooth and compact,
or (if Borel--Moore homology is used) possibly singular and of dimension at most~\(3\).
As a consequence of our results we obtain the following substantial enlargement of this list,
see~\Cref{thm:maximal}.

\begin{theorem}
  Every smooth toric variety is maximal.
\end{theorem}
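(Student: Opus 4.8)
The plan is to deduce maximality from the explicit dga model, following the same strategy that works for the mod~\(2\) cohomology of real toric spaces. Recall that a complex variety~\(\XXX\) with antiholomorphic involution~\(\tau\) is maximal precisely when the mod~\(2\) total Betti number of the real locus~\(\XXX^{\tau}\) equals that of~\(\XXX\). For a smooth toric variety~\(\XXX=\XXX_{\Sigma}(\C)\) the real locus is (up to the deformation retraction of \Cref{thm:rts-toric-retraction}) a real toric space~\(X_{\Sigma}\), while the complex variety itself is homotopy equivalent to the corresponding quotient of the \emph{complex} moment-angle complex. So the claim becomes the comparison of two total Betti numbers over~\(\Z_{2}\): that of~\(X_{\Sigma}=Z_{\Sigma}/K\) and that of its complex counterpart~\(\ZK_{\Sigma}(D^{2},S^{1})/K_{\C}\).

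First I would specialize \Cref{thm:main} (and its toric-variety version \Cref{thm:model-toric}) to \(\kk=\Z_{2}\). As noted after \eqref{eq:rel-mod2}, the dga~\(A(\Sigma,L)\) then becomes the Stanley--Reisner ring~\(\Z_{2}[\Sigma]\) tensored with the function algebra~\(\FU{L}=\Z_{2}[L]\), with commuting degree-\(1\) generators~\(t_{j}\) and the Koszul-type differential \(d s_i = \sum_j x_{ij} t_j\). This is exactly (a version of) the Koszul complex computing \(\Tor_{\Z_{2}[L]}(\Z_{2}[\Sigma],\Z_{2})\); indeed \Cref{thm:iso-Tor} records that the mod~\(2\) cohomology of~\(X_{\Sigma}\) is this torsion product. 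The key point is that the \emph{same} homological algebra — a Koszul resolution over a polynomial ring on \(n\) generators, applied to \(\Z_{2}[\Sigma]\) — computes \(H^*(\XXX_{\Sigma}(\C);\Z_{2})\), via Franz's model \cite{Franz:torprod} for complex toric varieties, except that there all generators sit in degree~\(2\) instead of degree~\(1\). Doubling every cohomological degree sets up a bijection between the two~\(\Z_{2}\)-cohomologies \emph{as graded vector spaces}; in particular their total dimensions agree. One must check that the complex model's differential, reduced mod~\(2\), matches the degree-doubled version of \eqref{eq:rel-mod2} — this is where the \(x_{ij}\) enter both pictures identically because the map \(G\to L\) is the real shadow of \(T\to L_{\C}\).

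Once that degree-doubling bijection is in hand, I would invoke the standard Smith-theory inequality: for any space with an involution, \(\dim_{\Z_{2}} H^*(\XXX^{\tau};\Z_{2}) \le \dim_{\Z_{2}} H^*(\XXX;\Z_{2})\), with equality defining maximality. We have just shown the two sides are equal, so \(\XXX_{\Sigma}(\C)\) with its complex conjugation is maximal, which is exactly \Cref{thm:maximal}. No compactness or dimension hypothesis is needed because the dga models of \Cref{thm:main} and \cite{Franz:torprod} are valid for arbitrary (in particular non-compact, higher-dimensional) smooth~\(\Sigma\); this is what enlarges the Bihan--Franz--McCrory--van Hamel list.

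The main obstacle I anticipate is not the Smith inequality but making the comparison of the two torsion-product computations fully rigorous over~\(\Z_{2}\): one needs the complex model of \cite{Franz:torprod} reduced mod~\(2\) to be genuinely a degree-doubled copy of~\(A(\Sigma,L)\otimes\Z_{2}\), not merely to have the same Betti numbers by a separate computation. Concretely, the complex side's generators corresponding to the \(s_i\) live in degree~\(2\) and square to something that could a priori differ from the real side after the degree shift; one must verify the relations \eqref{eq:rel-si-tj}, the differential \eqref{eq:intro:diff-si-tj} (whose \(d t_j = -2 t_j^2\) term vanishes mod~\(2\), as does the corresponding term in the complex model), and the resulting quasi-isomorphism type all respect the doubling. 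Alternatively, and perhaps more cleanly, one can bypass an explicit isomorphism and simply compare Poincaré series: show \(\dim_{\Z_2} H^k(X_\Sigma;\Z_2) = \dim_{\Z_2} H^{2k}(\XXX_\Sigma(\C);\Z_2)\) for all~\(k\) because both equal the dimension of the \(k\)-th (resp.\ suitably regraded) torsion-product group \(\Tor^{-k}_{\Z_2[L]}(\Z_2[\Sigma],\Z_2)\), summing over~\(k\) to get the total Betti numbers. Either way the arithmetic is routine once the identification of the two Tor-computations is set up.
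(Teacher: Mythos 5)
Your proposal is correct and takes essentially the same route as the paper: both mod~\(2\) cohomologies are identified with the same ungraded torsion product \(\Tor_{R}(\Z_{2},\Z_{2}[\Sigma])\) — via \Cref{thm:iso-Tor} carried over by \Cref{thm:model-toric} on the real side, and Franz's Tor description for smooth complex toric varieties on the complex side — so the total Betti numbers agree and Smith theory gives maximality. Your ``cleaner alternative'' of comparing the two Tor computations (degrees doubled) rather than building an explicit degree-doubled dga isomorphism is exactly what the paper does, so the obstacle you worried about does not arise.
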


The example of a \(6\)-dimensional non-maximal singular projective toric variety due to Hower~\cite{Hower:2008}
shows that the smoothness assumption cannot be left out.

\smallskip

The paper is organized as follows. In \Cref{sec:prelim} we review necessary background material
including anti-commutative face (co)algebras.
Real toric spaces are considered in \Cref{sec:rts} and toric varieties in \Cref{sec:toric}.

\begin{acknowledgements}
  The first version of this paper only considered the case~\(\kk=\Z_{2}\).
  I thank Xin Fu for encouraging me to think about the integer case.
  The resulting generalization to arbitrary coefficients has also led to a simpler proof.
\end{acknowledgements}

\section{Preliminaries}
\label{sec:prelim}

\subsection{Complexes and (co)algebras}

We work over a commutative ring~\(\kk\) with unit.
Unless indicated otherwise, all tensor products are taken over~\(\kk\),
and all (co)chains complexes and (co)homologies are with coefficients in~\(\kk\).

The differential on the dual~\(C^{*}\) of a complex~\(C\) is defined by
\begin{equation}
  \label{eq:sign-diff-dual}
  \pair{d\,\gamma,c} = -(-1)^{\deg{\gamma}}\,\pair{\gamma,d\,c}
\end{equation}
for~\(\gamma\in C^{*}\) and~\(c\in C\). Given another complex~\(B\), the tensor products~\(B\otimes C\) and~\(B^{*}\otimes C^{*}\)
pair via the formula
\begin{equation}
  \label{eq:sign-pairing-tensor}
  \bigpair{\beta\otimes\gamma,b\otimes c} = (-1)^{\deg{\gamma}\deg{b}}\,\pair{\beta,b}\,\pair{\gamma,c}
\end{equation}
for~\(\beta\in B^{*}\),~\(b\in B\) and \(\gamma\),~\(c\) as before.

We abbreviate `differential graded (co)algebra' to dgc and dga, respectively.
A \(\GG\)-complex, \(\GG\)-dgc or \(\GG\)-dga has a linear action of the group~\(\GG\) and equivariant structure maps.
Note that the quotient~\(C/K\) of a \(\GG\)-dgc~\(C\) by a normal subgroup~\(K\lhd\GG\) is a \(\GG/K\)-dgc.
A \(\GG\)-homotopy equivalence between \(\GG\)-complexes is a homotopy equivalence for which all maps involved are \(\GG\)-equivariant.

\subsection{Normalized chains and cochains}

It will be convenient to use simplicial sets. A good reference for our purposes is \cite{May:1968}.
All simplicial sets we consider will be Kan complexes.

The normalized chain complex~\(C(X)\) of a simplicial set~\(X\) is obtained
from non-normalized chains by dividing out the subcomplex of degenerate chains,
\cf~\cite[Sec.~VIII.6]{MacLane:1967}. The projection map is a homotopy equivalence.
For~\(X=pt\) a point we have a canonical isomorphism~\(C(pt)=\kk\).

The complex~\(C(X)\) is a dgc with the Alexander--Whitney diagonal given by
\begin{equation}
  \Delta\,x = \sum_{k=0}^{n} x(0,\dots,k) \otimes x(k,\dots,n)
\end{equation}
for an \(n\)-simplex~\(x\in X\), where the terms on the right indicate faces of~\(x\).
Its dual is the dga~\(C^{*}(X)\) of normalized singular cochains on~\(X\)
with the usual cup product. If \(X\) is discrete, then \(C^{*}(X)=H^{*}(X)\) is the algebra~\(\FU{X}\)
of functions~\(X\to\kk\) with pointwise operations.

If a (discrete) group~\(G\) acts on~\(X\) from the left,
then \(C(X)\) becomes a left \(G\)-dgc and \(C^{*}(X)\) a right \(G\)-dga.

We recall the Eilenberg--Zilber maps:
Given two simplicial sets~\(X\) and~\(Y\), there are the shuffle map
\begin{align}
  \shuffle=\shuffle_{X,Y}\colon& C(X)\otimes C(Y) \to C(X\times Y), \\
  \shortintertext{the Alexander--Whitney map}
  \AW=\AW_{X,Y}\colon& C(X\times Y) \to C(X)\otimes C(Y) \\
  \shortintertext{and the Eilenberg--Mac\,Lane homotopy}
  H=H_{X,Y}\colon& C(X\times Y) \to C(X\times Y),
\end{align}
all natural in the pair~\((X,Y)\). They form a contraction in the sense that
\begin{equation}
  \AW\,\shuffle = 1
  \qquad\text{and}\qquad
  d(H) = d\,H+H\,d = \shuffle\,\AW - 1,
\end{equation}
see~\cite[Thm.~2.1a]{EilenbergMacLane:1954}.
Here we have written ``\(1\)'' for the identity maps on~\(C(X)\otimes C(Y)\) and~\(C(X\times Y)\).
The shuffle map and the Alexander--Whitney map are associative, and the shuffle map is
additionally a morphism of dgcs \cite[(17.6)]{EilenbergMoore:1966}.

By iteration, the Eilenberg--Zilber contraction extends to more than two factors.
For example, in the presence of a third simplicial set~\(Z\) one uses the homotopy
\begin{equation}
  H_{X,Y,Z} = \shuffle_{X\times Y,Z}\,(H_{X,Y}\otimes 1)\,\AW_{X\times Y,Z} + H_{X\times Y,Z}
\end{equation}
to compare the shuffle map and the Alexander--Whitney map
relating the two dgcs~\(C(X)\otimes C(Y)\otimes C(Z)\) and~\(C(X\times Y\times Z)\).

\subsection{Classifying spaces and universal bundles}
\label{sec:universal}

Given a (discrete) group~\(\GG\), there is the simplicial version~\(E\GG\to B\GG\) of the universal right \(\GG\)-bundle.
See~\cite[Sec.~3.7]{Franz:2003} for the more general case of a simplicial group 
or also~\cite[\S 21]{May:1968} for a version where the group acts on the total space from the left.
The \(n\)-simplices of~\(EG\) are of the form~\(e = ([g_{0},\dots,g_{n-1}],g_{n})\)
with~\(g_{0}\),~\dots,~\(g_{n}\in G\). Such a simplex is degenerate if and only if \(g_{i}=1\) for some~\(i<n\).
For any~\(0\le i\le j\le n\) the face of~\(e\) with vertices~\(i\),~\(i+1\),~\dots,~\(j\) is given by
\begin{equation}
  e(i,\dots,j) = \bigl([g_{i},\dots,g_{j-1}],g_{j}\cdots g_{n}\bigr),
\end{equation}
and for~\(n\ge 1\) the differential of~\(e\) is
\begin{align}
  d\,e &= \bigl([g_{1},\dots,g_{n-1}],g_{n}\bigr)
  + \sum_{i=1}^{n-1}(-1)^{i}\,\bigl([g_{0},\dots,g_{i-1}\,g_{i},\dots,g_{n-1}],g_{n}\bigr) \\
  \notag &\qquad + (-1)^{n}\,\bigl([g_{0},\dots,g_{n-2}],g_{n-1}\,g_{n}\bigr).
\end{align}

In our application, \(G\) will be a \(2\)-torus.
The case where \(\GG=\CC=\{1,o\}\) is the group with two elements will therefore be particularly important.
In this situation \(E\CC\) has exactly two non-degenerate simplices in each dimension~\(n\ge0\), namely
\begin{equation}
  w_{n} = ([\underbrace{o,\dots,o}_{n}],1)
  \qquad\text{and}\qquad
  w_{n}\,o = ([\underbrace{o,\dots,o}_{n}],o).
\end{equation}
In the normalized chain complex~\(C(E\CC)\) one has
\begin{equation}
  \label{eq:diff-EH}
  d\,w_{n} = w_{n-1} + (-1)^{n}\,w_{n-1}\,o
\end{equation}
for~\(n\ge1\) as well as
\begin{equation}
  \label{eq:Delta-EH}
  \Delta w_{n} = \sum_{i+j=n} w_{i}\,o^{j} \otimes w_{j}
\end{equation}
for~\(n\ge0\). Hence the image~\(\bar w_{n}=[o,\dots,o]\) of~\(w_{n}\) and \(w_{n}\,o\) in~\(C(B\CC)\) satisfies
\begin{equation}
  \label{eq:diff-Delta-BH}
  d\,\bar w_{n} = \begin{cases}
    2\,\bar w_{n} & \text{if \(n\ge2\) is even,} \\
    0 & \text{otherwise}
  \end{cases}
  \qquad\text{and}\qquad
  \Delta\,\bar w_{n} = \sum_{i+j=n} \bar w_{i} \otimes \bar w_{j}
\end{equation}
for all~\(n\ge0\).

Any simplicial set~\(X\) with a left action of a group~\(G\) gives rise to an associated bundle
\begin{equation}
  EG\timesunder{G}X \to BG \,;
\end{equation}
its total space is the simplicial Borel construction~\(X_{G}\) of~\(X\).
We similarly write the pull-back of~\(EG\to BG\) along a map~\(q\colon Y\to BG\) as
\begin{equation}
  Y\timesover{BG}EG.
\end{equation}
It has a canonical right \(G\)-action that we convert to a left action in the usual way.

\begin{lemma}
  \label{eq:equiv-koszul}
  Let \(\GG\) be a group, and let \(X\) be a simplicial left \(\GG\)-space.
  There is a natural map
  \begin{equation*}
    X_{\GG}\timesover{B\GG} E\GG = \bigl( E\GG\timesunder{\GG}X \bigr) \timesover{B\GG} E\GG \to X
  \end{equation*}
  that is a homotopy equivalence and also \(G\)-equivariant.
\end{lemma}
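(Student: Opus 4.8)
The plan is to exhibit the map concretely and then split the homotopy equivalence into two simpler pieces. Write $P = E\GG \timesunder{\GG} X$ for the Borel construction, with its projection $p\colon P \to B\GG$, and let $Q = P \timesover{B\GG} E\GG = \{(z,e) : p(z) = \pi(e)\}$ where $\pi\colon E\GG \to B\GG$ is the universal projection. A point of $Q$ is thus a pair consisting of an orbit $[e',x]$ in $P$ together with a point $e$ of $E\GG$ lying over $p([e',x]) = \pi(e')$, i.e.\ $e$ and $e'$ have the same image in $B\GG$. Since $\GG$ is discrete, $E\GG$ is a principal $\GG$-bundle over $B\GG$ in the simplicial sense, so knowing $\pi(e) = \pi(e')$ determines a unique $g \in \GG$ with $e' = e \cdot g$. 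The natural map $Q \to X$ is then $\bigl([e',x],e\bigr) = \bigl([e\cdot g, x], e\bigr) \mapsto g\cdot x$; one checks this is well defined (replacing $e'$ by $e'h^{-1}$ and $x$ by $hx$ replaces $g$ by $gh^{-1}$ and leaves $g\cdot x$ unchanged) and simplicial. The left $\GG$-action on $Q$ comes, as indicated in the excerpt, from the right $\GG$-action on the $E\GG$-factor converted to a left action; under our formula a group element acts on the second coordinate $e$ by $e \mapsto e\cdot h^{-1}$, which changes $g$ to $h g$ and hence sends $g\cdot x$ to $hg\cdot x$, so the map is $\GG$-equivariant.

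For the homotopy equivalence, first I would dispose of the pull-back. The projection $E\GG \to B\GG$ is a simplicial fibration whose fibres are the contractible space $\GG$-torsor; more to the point, $E\GG$ is $\GG$-equivariantly contractible (it has an extra degeneracy / contracting homotopy), so for \emph{any} map $q\colon Y \to B\GG$ the pull-back projection $Y\timesover{B\GG}E\GG \to Y$ is a homotopy equivalence, and in fact a $\GG$-equivariant one when $Y$ carries no $\GG$-action and the action on the pull-back is the one on the $E\GG$-factor. Applying this with $Y = P$, the composite $Q \to P \to ?$ is not quite what we want, though: the first projection $Q \to P$ is a homotopy equivalence but is \emph{not} $\GG$-equivariant for our $\GG$-action on $Q$. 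So instead I would combine the two projections. Consider the two maps $Q \to X$: ours, call it $\mu$, and the ``naive'' one $\nu\bigl([e',x],e\bigr) = $ the image of $[e',x]$ under $P \to E\GG\timesunder{\GG}X \to X$ which only makes sense after picking a point of the fibre — precisely what the second coordinate $e$ supplies. In fact $\nu$ is just $\mu$ composed with using $e' = e$ instead of $e' = e g$, i.e.\ $\nu$ corresponds to $g = 1$. The cleanest route: observe that $Q \cong E\GG \timesunder{\GG} (E\GG \times X)$ with $\GG$ acting diagonally and the map to $X$ being induced by the $\GG$-map $E\GG\times X \to X$, $(e,x)\mapsto x$ — wait, one must track which $E\GG$ is which. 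Let me set it up as $Q \cong E\GG \timesover{B\GG} P$; since $E\GG\to B\GG$ is a $\GG$-equivariant trivial fibration (here $\GG$ acts on $E\GG$ on the left), the projection $E\GG\timesover{B\GG}P \to P$ is a $\GG$-homotopy equivalence with $\GG$ acting trivially on $P$ and the standard way on $E\GG$. Then $Q \to X$ factors as this $\GG$-homotopy equivalence to $P$ followed by a map $P\to X$ — but $P$ has trivial $\GG$-action and $X$ does not, a contradiction unless that map is null on orbits; this shows my identification $Q\cong E\GG\timesover{B\GG}P$ is \emph{$\GG$-equivariantly wrong} and the correct one twists the action.

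Here is the step I expect to be the main obstacle, and how I would handle it. The subtlety is exactly that $Q$ carries \emph{two} commuting copies of $\GG$-relevant data — the structure group of $P$ and the structure group of the outer pull-back — and our map $\mu$ uses the \emph{difference} of the two torsor trivialisations. The correct model is $Q \cong E\GG \timesunder{\GG} X$ again, but via a \emph{non-obvious} isomorphism: send $\bigl([e',x],e\bigr)$ to $\bigl[e, g\cdot x\bigr]$ where $e' = e\cdot g$ as above. One verifies this is a simplicial isomorphism $Q \xrightarrow{\ \cong\ } E\GG\timesunder{\GG}X = P$ intertwining our left $\GG$-action on $Q$ with the action on $P$ \emph{induced by the left $\GG$-action on the fibre $X$}, i.e.\ the action $h\cdot[e,x] = [e, h x]$ — but this is \emph{not} a simplicial action on $P$ in general, since it need not commute with the gluing; rather it is only defined after we remember $e$, i.e.\ it \emph{is} well defined on $Q$. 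This circular-feeling point is resolved by just checking equivariance of $\mu$ directly from its formula, as I did in the first paragraph, and separately checking that $\mu$ is a homotopy equivalence by a non-equivariant argument: the composite $X \xrightarrow{x\mapsto [e_0,x,e_0]} Q$ for a chosen vertex $e_0\in E\GG$ over the basepoint is a section of $\mu$ up to homotopy (use the contracting homotopy of $E\GG$ in \emph{both} $E\GG$-coordinates simultaneously, which is legitimate because they sit over the same base point and the contraction is compatible with faces), and $\mu$ applied to this section is the identity on the nose. Equivalently and more cleanly, factor $\mu$ through the projection $Q = P\timesover{B\GG}E\GG \to P$, which is a homotopy equivalence by contractibility of $E\GG$ (Lemma-free, just the extra-degeneracy argument), composed with $P \to X_{h\GG} \to X$ — no: the honest statement is that $\mu$ equals the composite $Q\to P\to B\GG$-nothing, so instead I would simply say: \emph{both} $\mu$ and the projection $\mathrm{pr}_1\colon Q\to P$ are quotients by, respectively, free contractible actions after pulling back, hence homotopy equivalences; equivariance is the content of the first paragraph. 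The remaining verifications — that all maps are simplicial, that naturality in $X$ holds, and the degenerate-simplex bookkeeping — are routine.
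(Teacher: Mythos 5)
The paper does not actually prove this lemma; it cites \cite[Thm.~2.8.2]{Franz:2001} and \cite[Thm.~3.11]{Franz:2003}, so you are supplying a direct argument. Your first paragraph is correct and is the standard construction: writing $P=E\GG\timesunder{\GG}X$ and $Q=P\timesover{B\GG}E\GG$, the map $\mu\colon\bigl([e\cdot g,x],e\bigr)\mapsto g\cdot x$ is well defined, simplicial and $\GG$-equivariant exactly as you check. The gap is in the homotopy-equivalence half, where each argument you offer fails. The fibres of $E\GG\to B\GG$ are copies of the discrete group $\GG$, not contractible, so it is false that $Y\timesover{B\GG}E\GG\to Y$ is a homotopy equivalence for every $Y\to B\GG$ (take $Y=B\GG$); in particular the projection $\mathrm{pr}_1\colon Q\to P$, on which your closing sentence relies, is not a homotopy equivalence in general (for $X=pt$ it is $E\GG\to B\GG$). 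Likewise $E\GG$ is \emph{not} $\GG$-equivariantly contractible: an equivariant contraction would descend to a contraction of $B\GG$. For the same reason the homotopy you sketch for $s\mu\simeq\mathrm{id}_Q$ (where $s\colon x\mapsto([e_0,x],e_0)$), namely applying a contraction of $E\GG$ ``in both $E\GG$-coordinates'', is not even a map into $Q$: it does not respect the identification $[e',x]=[e'h^{-1},hx]$ unless the contraction is equivariant, and two simplices in the same fibre need not stay over the same simplex of $B\GG$ under the contraction --- a contraction with that property would again force $B\GG$ to be contractible. Finally, the map you call a ``non-obvious isomorphism'' $Q\to P$, $([e',x],e)\mapsto[e,g\cdot x]$, is just $\mathrm{pr}_1$, since $[e,g\cdot x]=[e\cdot g,x]=[e',x]$, so it cannot serve as an untwisting.

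The missing idea is that the untwisting isomorphism has target $E\GG\times X$, not $P$: the map $\Phi\colon Q\to E\GG\times X$, $\bigl([e\cdot g,x],e\bigr)\mapsto(e,g\cdot x)$, is a simplicial isomorphism with inverse $(e,y)\mapsto([e,y],e)$. It transports $\mu$ to the projection $E\GG\times X\to X$ and the (converted) left action to $h\cdot(e,y)=(e\cdot h^{-1},h\cdot y)$, for which the projection is equivariant. The projection is a homotopy equivalence simply because $E\GG$ is contractible: apply the contraction to the single $E\GG$-factor. Pulled back through $\Phi$, this is the homotopy you wanted, but performed on the canonical representative, \(\bigl([H(e,t),g\cdot x],H(e,t)\bigr)\), rather than on $e'$ and $e$ separately --- which is precisely why it is well defined and stays in $Q$. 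With this replacement, your equivariance computation and the routine naturality check give a complete proof, along the lines of the cited references.
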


\begin{proof}
  See~\cite[Thm.~2.8.2]{Franz:2001} or~\cite[Thm.~3.11]{Franz:2003}.
\end{proof}

\subsection{Anti-commutative face (co)algebras}
\label{sec:face}

Let \(\Sigma\) be a simplicial complex on the vertex set~\([m]\).
For any~\(\sigma\in\Sigma\) we define the dgc
\begin{equation}
  \label{eq:def-SRC-sigma}
  \SRC{\sigma} = C(BC)^{\otimes\sigma},
\end{equation}
where \(C\) again denotes the group with two elements,
and based on this the dgc
\begin{equation}
  \label{eq:def-SRC-Sigma}
  \SRC{\Sigma} = \colim_{\sigma\in\Sigma} \SRC{\sigma}.
\end{equation}
We call \(\SRC{\Sigma}\) the anti-commutative face coalgebra of~\(\Sigma\),
compare~\cite[p.~335]{BuchstaberPanov:2015} for the commutative case (without differential).
A \(\kk\)-basis for~\(\SRC{\Sigma}\) is given by the elements~\(u_{\alpha}\)
for all~\(\alpha\in\N^{m}\) whose support satisfies
\begin{equation}
  \supp\alpha \coloneqq \bigl\{ j\in[m] \bigm| \alpha_{j}\ne0 \bigr\} \in\Sigma.
\end{equation}
Taking the identities~\eqref{eq:diff-Delta-BH} as well as the usual Koszul sign rule into account,
we see that on such a basis element~\(u_{\alpha}\) the diagonal is given by
\begin{equation}
  \label{eq:Delta-SRC}
  \Delta\,u_{\alpha} = \sum_{\beta+\gamma=\alpha} (-1)^{\epsilon(\beta,\gamma)}\,u_{\beta} \otimes u_{\gamma}
\end{equation}
where
\begin{equation}
  \label{eq:def-epsilon-beta-gamma}
  \epsilon(\beta,\gamma) =  \sum_{j<j'}\gamma_{j}\beta_{j'},
\end{equation}
and the differential by
\begin{equation}
  \label{eq:diff-SRC}
  d\,u_{\alpha} = 2 \!\!\! \sum_{\alpha_{j}>0\text{~even}} \!\!\! (-1)^{\epsilon(\alpha,j-1)}\,u_{\alpha|j}.
\end{equation}
Here we have written \(\alpha|j\in\N^{m}\) for the multi-index that is
obtained from~\(\alpha\) by decreasing the \(j\)-th coordinate by~\(1\), and
\begin{equation}
  \label{eq:def-epsilon-alpha-j}
  \epsilon(\alpha,j) = \alpha_{1}+\dots+\alpha_{j}.
\end{equation}

The anti-commutative face algebra~\(\kk[\Sigma]\) of~\(\Sigma\) is the dga dual to the dgc~\(\SRC{\Sigma}\).
We write \((t^{\alpha})\) for the \(\kk\)-basis dual to the basis~\((u_{\alpha})\) for~\(\SRC{\Sigma}\).
For convenience we also write \(u_{j}=u_{\beta(j)}\) and~\(t_{j}=t^{\beta(j)}\) for~\(1\le j\le m\)
where \(\beta(j)=(0,\dots,0,1,0,\dots,0)\) is the \(j\)-th canonical generator of~\(\N^{m}\).

The dual differential on~\(\kk[\Sigma]\) then is determined by
\begin{equation}
  \label{eq:diff-tj}
  d\,t_{j} = -2\,t_{j}^{2}.
\end{equation}
In fact, taking our sign rules~\eqref{eq:sign-diff-dual} and~\eqref{eq:sign-pairing-tensor} into account, we find
\begin{align}
  \bigpair{d\,t_{j},u_{2j}} &= \pair{t_{j},d\,u_{2j}} = \pair{t_{j},2\,u_{j}} = 2 \\
  \notag &= 2\,\pair{t_{j}, u_{j}} \, \pair{t_{j},u_{j}}
  = -2\,\pair{t_{j}\otimes t_{j}, u_{j}\otimes u_{j}} \\
  \notag &= -2\,\pair{t_{j}\otimes t_{j},\Delta\,u_{2j}} = \bigpair{-2\,t_{j}^{2},u_{2j}},
\end{align}
and both \(d\,t_{j}\) and~\(t_{j}^{2}\) vanish on any~\(u_{\alpha}\) with~\(\alpha\ne2\beta(j)\).

For~\(j<j'\) and~\(\alpha=\beta(j)+\beta(j')\) one similarly gets
\begin{align}
  \pair{t_{j}\,t_{j'},u_{\alpha}} &= \bigpair{t_{j}\otimes t_{j'},\Delta\,u_{\alpha}}
  =  \bigpair{t_{j}\otimes t_{j'}, u_{j}\otimes u_{j'}} = 1, \\
  \pair{t_{j'}\,t_{j},u_{\alpha}} &= \bigpair{t_{j'}\otimes t_{j},\Delta\,u_{\alpha}}
  =  -\bigpair{t_{j'}\otimes t_{j}, u_{j'}\otimes u_{j}} = -1.
\end{align}
Both \(t_{j}\,t_{j'}\) and~\(t_{j'}\,t_{j}\) vanish on all~\(u_{\gamma}\) with~\(\gamma\ne\alpha\),
which shows
\begin{equation}
  \label{eq:tj-tjp}
  t_{j}\,t_{j'} = - t_{j'}\,t_{j}
\end{equation}
for all~\(j\ne j'\).

Iterating the diagonal on~\(\SRC{\Sigma}\) gives
\begin{equation}
  \pair{t_{j_{1}}\cdots t_{j_{k}},u_{\alpha}} = 0
\end{equation}
for any non-face~\(\{j_{1},\dots,j_{k}\}\notin\Sigma\) and all~\(\alpha\) with~\(\supp\alpha\in\Sigma\), whence
\begin{equation}
  \label{eq:t-prod-missing}
 t_{j_{1}}\cdots t_{j_{k}} = 0.
\end{equation}
Note that the order of the generators is irrelevant since they anti-commute.

If the characteristic of~\(\kk\) is \(2\), we recover the usual Stanley--Reisner algebra~\(\kk[\Sigma]\)
on commuting generators of degree~\(1\) and without differential, as remarked already in the introduction.

\section{Real toric spaces}
\label{sec:rts}

Let \(\Sigma\) be a simplicial complex on the set~\([m]\), and let
\begin{equation}
  Z_{\Sigma} = \ZK_{\Sigma}(D^{1},S^{0})
\end{equation}
be the associated real moment-angle complex. The \(2\)-torus~\(G=(\Z_{2})^{m}\) acts on~\(Z_{\Sigma}\)
in a canonical fashion. Let \(K\subset G\) be a freely acting subgroup.
The projection~\(Z_{\Sigma}\to X_{\Sigma}=Z_{\Sigma}/K\) is equivariant with respect to the quotient map~\(p\colon G\to L=G/K\).

\subsection{A dga model for real toric spaces}
\label{sec:dga-model}

The goal of this section is to prove our main result, announced already in \Cref{thm:main}.
We note that for \(2\)-tori there is no need to distinguish between left and right actions.

\begin{theorem}
  \label{thm:model-rts}
  The \(L\)-dga~\(A(\Sigma,L)\) is naturally quasi-isomorphic to~\(C^{*}(X_{\Sigma})\).
\end{theorem}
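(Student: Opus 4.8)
The plan is to build the quasi-isomorphism through the simplicial Borel construction, using \Cref{eq:equiv-koszul} to trade the space $X_\Sigma = Z_\Sigma/K$ for a model in which the $G$-action on $Z_\Sigma$ is made explicit and then the $K$-quotient is taken algebraically. First I would fix a simplicial model for $Z_\Sigma$. Since $Z_\Sigma = \ZK_\Sigma(D^1,S^0)$ is a polyhedral product, its simplicial chains should be computable as a colimit over $\sigma\in\Sigma$ of the chains on $Z_\sigma = (D^1)^\sigma\times(S^0)^{[m]\setminus\sigma}$, and the $G=(\Z_2)^m$-action is coordinatewise. The key point is that for each coordinate the pair $(D^1,S^0)$ with its $\Z_2=\CC$-action is $\CC$-equivariantly modelled by $E\CC\to$ (its quotient $S^0$), so that $Z_\sigma$ has a $G$-equivariant model whose chains, after passing to the Borel construction over the $j$-th copy of $\CC$ when $j\in\sigma$, become $C(B\CC)$, and when $j\notin\sigma$ stay $C(E\CC)\simeq\kk$. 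This is exactly the origin of the coalgebra $\SRC{\Sigma}=\colim_\sigma C(B\CC)^{\otimes\sigma}$ from \Cref{eq:def-SRC-Sigma}: dualizing gives the face algebra $\kk[\Sigma]$ with differential $d\,t_j=-2t_j^2$ as in \Cref{eq:diff-tj}.

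Next I would assemble the equivariant model. Applying \Cref{eq:equiv-koszul} to $X=Z_\Sigma$ with the residual $L=G/K$-action, one gets a natural $L$-homotopy equivalence $(Z_\Sigma)_L \timesover{BL} EL \to X_\Sigma$ — here I first form the full Borel construction $(Z_\Sigma)_G$ over $BG=(B\CC)^m$, which on cochains contributes precisely a $\kk[\Sigma]$-type factor, then use that $(Z_\Sigma)_G$ fibres over $BK$ with the fibre being $(Z_\Sigma)_L$, and the pullback along $q\colon(Z_\Sigma)_L\to BL$ of $EL\to BL$ recovers $X_\Sigma$ up to $L$-homotopy. On normalized cochains, the Eilenberg--Zilber contraction $(\shuffle,\AW,H)$ turns the cochains of this pullback (a twisted tensor product over $C^*(BL)$) into a dga built from $\kk[\Sigma]$, $C^*(BL)$ and $C^*(EL)$. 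Because $L$ is a finite discrete group, $C^*(EL)\simeq\kk$ and $C^*(EL)$ as an $L$-dga is the bar-type resolution whose dual small model is just $\FU{L}=C^*(L)$, the functions on $L$; meanwhile $C^*(BL)$ acts through the polynomial-type generators $s_i$ coming from the matrix $(x_{ij})$ encoding $G\to L$. Collating these identifications produces exactly $A(\Sigma,L)=\kk[\Sigma]\otimes\FU{L}$ with the twisted commutation relation \Cref{eq:rel-si-tj} and the differential \Cref{eq:intro:diff-si-tj}; the cross term $d\,f=\sum_j(ft_j-t_jf)$ is the twisting cochain recording how $EL$ is glued over $BL$ inside $EG$ over $BG$, i.e.\ the image of the generators under $BG\to BL$ evaluated on $w_1$'s.

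The main obstacle will be controlling the multiplicative structure through all of these steps: the shuffle map is a dgc morphism but the Alexander--Whitney map is not an algebra map, so the cup product on $C^*$ of the pulled-back bundle is only homotopy-associative/commutative in the expected way, and I must check that the contracted small model $A(\Sigma,L)$ carries an honestly associative product matching \Cref{eq:rel-si-tj} — this is where the anti-commutativity of the $t_j$ (from \Cref{eq:tj-tjp}, \Cref{eq:t-prod-missing}) and the careful Koszul signs in $\epsilon(\beta,\gamma)$ must be reconciled with the signs in the Eilenberg--Mac\,Lane homotopy $H_{X,Y,Z}$. Concretely I would use homological perturbation (the basic perturbation lemma applied to the iterated Eilenberg--Zilber contraction, with the twisting differential of the bundle as the perturbation) to transport the dga structure, then verify by direct computation on generators that the perturbed product and differential on $\kk[\Sigma]\otimes C^*(EL)\to\kk[\Sigma]\otimes\FU{L}$ are precisely \Cref{eq:rel-si-tj} and \Cref{eq:intro:diff-si-tj}. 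Naturality in $\Sigma$ is automatic since every construction (polyhedral product, Borel construction, Eilenberg--Zilber maps, perturbation) is functorial for inclusions of subcomplexes, and $L$-equivariance is built in because we never broke the residual $L$-symmetry. Finally, the sanity check at $\kk=\Z_2$ reproducing the Stanley--Reisner description and \Cref{eq:rel-mod2} confirms the signs.
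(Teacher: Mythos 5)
Your first half follows the paper's route: comparing Borel constructions, passing to \(\DJ_{\Sigma}\), and using \Cref{eq:equiv-koszul} to replace \(X_{\Sigma}\) by the pullback \(\DJ_{\Sigma}\timesover{BL}EL\) is essentially the paper's first step. (Two slips there: \(L\) does not act on \(Z_{\Sigma}\), so ``\((Z_{\Sigma})_{L}\)'' is undefined --- the correct statement is that \((Z_{\Sigma})_{G}\to(X_{\Sigma})_{L}\) is a bundle with fibre \(EK\), hence an equivalence; and the zigzag one obtains consists of \(L\)-equivariant maps that are homotopy equivalences of spaces, not \(L\)-homotopy equivalences, which is all that is needed.)

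The genuine gap is in your treatment of the multiplicative structure. You propose to apply the basic perturbation lemma to the (iterated, twisted) Eilenberg--Zilber contraction for the bundle \(\DJ_{\Sigma}\timesover{BL}EL\) and to ``transport the dga structure''. Homotopy transfer along such a contraction produces in general only an \(A_{\infty}\)-structure on the small complex and an \(A_{\infty}\)-morphism, not a strict dga together with a strict dga quasi-isomorphism; showing that the higher operations vanish (or can be removed) and that the binary product and differential are exactly \eqref{eq:rel-si-tj} and \eqref{eq:intro:diff-si-tj} is precisely the hard point, and ``verify by direct computation on generators'' does not meet it --- an \(A_{\infty}\)-structure is not determined by its values on generators, and the failure of the Alexander--Whitney map to be multiplicative is structural, not a matter of Koszul signs. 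The paper avoids perturbation altogether: for \(L=G\) the pullback is itself a polyhedral product, \(Y_{\Sigma}=\ZK_{\Sigma}(E\CC,\CC)\), so over each face \(Y_{\sigma}=\prod_{j}Y_{\restr{\sigma}{j}}\) is a product of the small simplicial sets \(E\CC\) and \(\CC\), and the shuffle map \(M(\sigma)=\bigotimes_{j}C(Y_{\restr{\sigma}{j}})\to C(Y_{\sigma})\) is a strict dgc morphism, natural and \(G\)-equivariant (\Cref{thm:M-Y}). Taking colimits, dividing by \(K\) (using that \(K\) acts freely, so \(C(Y_{\Sigma})/K=C(Y_{\Sigma}/K)\)) and dualizing yields a strict dga quasi-isomorphism between \(C^{*}(Y_{\Sigma,L})\) and \(\BB(\Sigma,L)=M(\Sigma,L)^{*}\); only then does a finite generator computation with the explicit diagonal of \(M(\Sigma,L)=\SRC{\Sigma}\otimes\kk[L]\) identify \(\BB(\Sigma,L)\) with \(A(\Sigma,L)\). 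To repair your argument, replace the perturbation step by this observation: the small model already exists at the chain level because \(C(E\CC)\) has only two nondegenerate simplices in each degree, so no contraction of \(C^{*}(EL)\) is needed and all multiplicativity comes for free from the shuffle map being a coalgebra map.
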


The \(L\)-dga~\(A(\Sigma,L)\) has been defined in the introduction.
In the statement above, naturality refers to the inclusion of subcomplexes~\(\Sigma'\hookrightarrow\Sigma\).

To prove \Cref{thm:model-rts} 
we construct a natural zigzag of quasi-iso\-mor\-phisms connecting the two dgas.
Instead of the additive group~\(\Z_{2}\) we use the multiplicative group~\(\CC=\{1,\cc\}\),
and we write \(\cc_{1}\),~\dots,~\(\cc_{m}\) for the canonical generators of~\(G=\CC^{m}\).

\subsubsection{First step}

We start by comparing the Borel constructions of~\(Z_{\Sigma}\) and~\(X_{\Sigma}\).

\begin{lemma}
  The map~\((Z_{\Sigma})_{G}\to (X_{\Sigma})_{L}\) induced by the projection~\(Z_{\Sigma}\to X_{\Sigma}\)
  is a homotopy equivalence and natural in~\(\Sigma\).
\end{lemma}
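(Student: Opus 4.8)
The plan is to recognize the map as one induced by the quotient homomorphism $p\colon G\to L$ between Borel constructions, and then to reduce to the invariance of the orbit construction under equivariant weak equivalences; the hypothesis that $K$ acts freely on $Z_{\Sigma}$ will enter at exactly one point. Write $\pi\colon Z_{\Sigma}\to X_{\Sigma}$ for the projection and $Ep\colon EG\to EL$ for the $p$-equivariant simplicial map induced by $p$; it is a homotopy equivalence because $EG$ and $EL$ are contractible. Let $G$ act on $EL$ through $p$. Quotienting $EL\times Z_{\Sigma}$ first by $K$, which acts trivially on $EL$ and freely on $Z_{\Sigma}$, and then by $L=G/K$, one obtains a natural identification $EL\timesunder{G}Z_{\Sigma}=EL\timesunder{L}X_{\Sigma}=(X_{\Sigma})_{L}$, under which the map of the lemma becomes the map $EG\timesunder{G}Z_{\Sigma}\to EL\timesunder{G}Z_{\Sigma}$ induced by $Ep\times\mathrm{id}_{Z_{\Sigma}}$. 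Naturality in $\Sigma$ is then immediate, since every step is functorial for the $G$-equivariant inclusions $Z_{\Sigma'}\hookrightarrow Z_{\Sigma}$, and $K$ still acts freely on each subcomplex.

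The crucial point is that, although neither $Z_{\Sigma}$ nor $X_{\Sigma}$ carries a free action (of $G$, respectively $L$), both $EG\times Z_{\Sigma}$ and $EL\times Z_{\Sigma}$ — the latter with $G$ acting on $EL$ through $p$ — are \emph{free} $G$-simplicial sets: a non-trivial element of $K$ moves the $Z_{\Sigma}$-coordinate because $K$ acts freely, while an element of $G$ outside $K$ moves the $EG$-, respectively $EL$-coordinate, since it acts through a non-trivial element on the free $G$-set $EG$, respectively the free $L$-set $EL$. Moreover $Ep\times\mathrm{id}_{Z_{\Sigma}}$ is $G$-equivariant and, being the product of the homotopy equivalence $Ep$ with an identity, a homotopy equivalence of underlying simplicial sets.

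To conclude, I would pass to $G$-orbits, using that the orbit functor $A\mapsto A/G$ takes weak equivalences between free $G$-simplicial sets to weak equivalences — as one sees by comparing the long exact homotopy sequences of the covering maps $A\to A/G$, which have discrete fibre $G$. Applied to $Ep\times\mathrm{id}_{Z_{\Sigma}}$, this yields that
\[
  (Z_{\Sigma})_{G}=(EG\times Z_{\Sigma})/G\longrightarrow(EL\times Z_{\Sigma})/G=EL\timesunder{G}Z_{\Sigma}=(X_{\Sigma})_{L}
\]
is a weak equivalence, hence a homotopy equivalence.

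I expect the only substantive step to be the freeness observation of the second paragraph — the sole place where the freeness of $K$ is used; everything else is formal. The one remaining thing requiring care is the bookkeeping that, under the identification $EL\timesunder{G}Z_{\Sigma}=(X_{\Sigma})_{L}$, the map induced by $Ep\times\mathrm{id}_{Z_{\Sigma}}$ really coincides with the one induced by $\pi\colon Z_{\Sigma}\to X_{\Sigma}$; this is checked by tracing an element $[e,z]$ through both descriptions.
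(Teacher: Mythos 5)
Your argument is correct, and the freeness check is the right crucial point, but it takes a different route from the paper. The paper's proof is a one-liner of a different flavour: it observes directly that the map \(EG\timesunder{G}Z_{\Sigma}\to EL\timesunder{L}X_{\Sigma}\) is a fibre bundle with fibre \(EK\) (this is where the freeness of \(K\) on \(Z_{\Sigma}\) enters), and since \(EK\) is contractible the map is a homotopy equivalence; naturality is immediate. You instead factor the map through \(EL\timesunder{G}Z_{\Sigma}\), verify that \(EG\times Z_{\Sigma}\) and \(EL\times Z_{\Sigma}\) (the latter with \(G\) acting through \(p\)) are free \(G\)-objects, and invoke that the orbit functor takes weak equivalences between free objects to weak equivalences. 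What the paper's argument buys is brevity and an explicit identification of the fibre; what yours buys is modularity -- the only geometric input is the freeness observation, and the rest is a general statement about quotients of free actions. Two small points deserve a word in your write-up: the covering-space/five-lemma step needs mild care with basepoints and components, since \(Z_{\Sigma}\) (hence \(EG\times Z_{\Sigma}\)) may be disconnected -- a clean alternative is to compare \(A/G\) with the Borel construction \(EG\timesunder{G}A\), which is a weak equivalence for free actions and manifestly preserves equivariant maps that are non-equivariant weak equivalences; and the promotion of the resulting weak equivalence to a homotopy equivalence uses that the spaces involved are Kan complexes (respectively CW complexes after realization), which holds in the setting of the paper.
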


\begin{proof}
  Naturality is clear. Because \(K\) acts freely on~\(Z_{\Sigma}\) with quotient~\(X_{\Sigma}\), the map
  \begin{equation}
    (Z_{\Sigma})_{G} = EG\timesunder{G} Z_{\Sigma} \to EL\timesunder{L} X_{\Sigma} = (X_{\Sigma})_{L}
  \end{equation}
  is a bundle with fibre~\(EK\) and therefore a homotopy equivalence.
\end{proof}

Let
\begin{equation}
  \DJ_{\Sigma} = \ZK_{\Sigma}(B\CC,*) \subset BG
\end{equation}
be the simplicial version of the real Davis--Januszkiewicz space associated to~\(\Sigma\).
Depending on the context,
we think of it as a space over~\(BG\) or over~\(BL\), that is, together with the canonical map~\(\DJ_{\Sigma} \to BG\)
or its prolongation to~\(BL\) determined by the map~\(Bp\colon BG\to BL\).

\begin{lemma}
  \label{thm:homotopy-DJ}
  There is a zigzag of homotopy equivalences between~\((Z_{\Sigma})_{G}\) and~\(\DJ_{\Sigma}\).
  The zigzag consists of maps over~\(BG\) and is natural in~\(\Sigma\).
\end{lemma}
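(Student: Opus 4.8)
The plan is to push the Borel construction inside the polyhedral product, reduce to a coordinatewise comparison, and then reassemble via homotopy colimits over the face poset of~\(\Sigma\).

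\emph{Reduction.} Since \(EG\timesunder{G}(-)\) preserves colimits and \(EG=(E\CC)^{m}\), \(BG=(B\CC)^{m}\), the Borel construction distributes over the polyhedral product, so that
\[(Z_{\Sigma})_{G}=EG\timesunder{G}Z_{\Sigma}=\ZK_{\Sigma}\bigl(E\CC\timesunder{\CC}D^{1},\,E\CC\timesunder{\CC}S^{0}\bigr),\]
and in each coordinate the structure map \((Z_{\Sigma})_{G}\to BG\) is the bundle projection \(E\CC\timesunder{\CC}D^{1}\to B\CC\) on the ``\(D^{1}\)-factors'' and \(E\CC\timesunder{\CC}S^{0}\to B\CC\) on the ``\(S^{0}\)-factors'' (both restrictions of \(EG\to BG\)). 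As \(S^{0}\cong\CC\) as a \(\CC\)-space, the latter factor is \(E\CC\timesunder{\CC}S^{0}\cong E\CC\) with its map to \(B\CC\) the universal covering projection. On the other side \(\DJ_{\Sigma}=\ZK_{\Sigma}(B\CC,*)\) is by definition a polyhedral product inside~\(BG\). Both \((Z_{\Sigma})_{G}\) and \(\DJ_{\Sigma}\) are colimits over the face poset of~\(\Sigma\) of diagrams of inclusions, hence homotopy colimits, and everything in sight is functorial in~\(\Sigma\).

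\emph{Coordinatewise equivalences over \(B\CC\).} It now suffices to connect the \(\CC\)-pair \((E\CC\timesunder{\CC}D^{1},\,E\CC\timesunder{\CC}S^{0})\) to \((B\CC,*)\) by a natural zigzag of weak equivalences of pairs that lies over \(B\CC\) on the ambient entry. For the ambient entry, the straight-line homotopy \((x,t)\mapsto(1-t)x\) is a \(\CC\)-equivariant deformation of \(D^{1}\) onto its fixed point~\(0\), so \(E\CC\timesunder{\CC}D^{1}\) deformation retracts onto the zero section \(E\CC\timesunder{\CC}\{0\}=B\CC\) over \(B\CC\). For the subspace entry, \(E\CC\timesunder{\CC}S^{0}\cong E\CC\) is contractible; the covering \(E\CC\to B\CC\) is null-homotopic, hence lifts through the based-path fibration \(PB\CC\to B\CC\), and the inclusion of the constant path \(*\hookrightarrow PB\CC\) is likewise an equivalence over \(B\CC\), giving the zigzag \(E\CC\to PB\CC\leftarrow *\) over \(B\CC\). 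Fitting the ambient and subspace legs into a zigzag of pairs
\[\bigl(E\CC\timesunder{\CC}D^{1},\,E\CC\timesunder{\CC}S^{0}\bigr)\ \longrightarrow\ (B\CC,\,PB\CC)\ \longleftarrow\ (B\CC,*)\]
and applying the polyhedral-product diagrams coordinatewise produces a natural zigzag of levelwise weak equivalences of diagrams of spaces over \(BG\) indexed by the face poset of~\(\Sigma\). Passing to homotopy colimits over \(BG\), and using that the two outer diagrams consist of inclusions so that their homotopy colimits are the honest colimits \((Z_{\Sigma})_{G}\) and \(\DJ_{\Sigma}\), yields the desired zigzag of homotopy equivalences over \(BG\), natural in~\(\Sigma\).

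\emph{Main obstacle.} The only genuine difficulty is keeping everything over \(BG\): the covering \(E\CC\to B\CC\) is null-homotopic but \emph{not} a homotopy equivalence over \(B\CC\), so one cannot collapse \(E\CC\timesunder{\CC}S^{0}\) to a point by a single map over \(B\CC\); one is forced to replace \(*\to B\CC\) by a fibrant model and to run the whole comparison at the level of homotopy colimits of diagrams of spaces over \(BG\). Everything else is formal once the Borel construction has been pushed inside the polyhedral product. (This is the simplicial form of the standard fibration \(Z_{\Sigma}\to\DJ_{\Sigma}\to BG\); one could instead pull \(EG\to BG\) back along \(\DJ_{\Sigma}\hookrightarrow BG\) and compare Borel constructions, but the reduction above keeps naturality in~\(\Sigma\) visibly under control.)
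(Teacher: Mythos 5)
Your proposal is correct in outline, and it takes a genuinely different route from the paper in the details. Both arguments begin the same way, by rewriting \((Z_{\Sigma})_{G}\) as the polyhedral product \(\ZK_{\Sigma}\bigl(E\CC\timesunder{\CC}D^{1},E\CC\timesunder{\CC}S^{0}\bigr)\), but the intermediate object and the justification diverge. The paper's zigzag is \(\ZK_{\Sigma}\bigl(E\CC\timesunder{\CC}D^{1},E\CC\timesunder{\CC}S^{0}\bigr)\leftarrow\ZK_{\Sigma}\bigl(E\CC\timesunder{\CC}D^{1},*\bigr)\rightarrow\ZK_{\Sigma}(B\CC,*)\): instead of collapsing the free coordinates \(E\CC\timesunder{\CC}S^{0}\cong E\CC\) forward (your ``main obstacle''), it includes the basepoint into them by a wrong-way map, so no fibrant replacement \(PB\CC\) is needed, every term stays an honest polyhedral product mapping to \(BG\), and the two maps are shown to be homotopy equivalences by induction on the number of simplices of \(\Sigma\), using K\"unneth and Mayer--Vietoris for homology and Seifert--van Kampen for fundamental groups. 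Your route replaces the basepoint by the path fibration \(PB\CC\to B\CC\) and compares homotopy colimits along levelwise weak equivalences; this works, but since \(PB\CC\to B\CC\) is not an inclusion the middle term is only a homotopy colimit, so your zigzag needs the extra comparison maps from homotopy colimit to colimit for the outer diagrams (valid by the projection lemma for NDR pairs---``consist of inclusions'' alone is a little thin), homotopy invariance of homotopy colimits, and a final appeal to Whitehead's theorem to upgrade weak equivalences to homotopy equivalences (all spaces involved have CW homotopy type). What each approach buys: yours is modular and amounts to the general homotopy-invariance property of polyhedral products, so it would generalize readily, whereas the paper's wrong-way inclusion keeps the entire zigzag inside genuine polyhedral products over \(BG\), which makes the ``over \(BG\)'' and naturality claims immediate and the verification elementary and self-contained.
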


\begin{proof}
  The zigzag is
  \begin{equation}
    (Z_{\Sigma})_{G} = \ZK_{\Sigma} \bigl( E\CC\timesunder{\CC}D^{1},E\CC\timesunder{\CC}S^{0} \bigr)
    \leftarrow \ZK_{\Sigma} \bigl( E\CC\timesunder{\CC}D^{1},* \bigr)
    \rightarrow \ZK_{\Sigma}(B\CC,*),
  \end{equation}
  compare~\cite[Lemma~5.1]{Franz:torprod}.
  All maps are compatible with the maps to~\(BG=\ZK_{\Sigma}(B\CC,B\CC)\) and natural in~\(\Sigma\).
  (The map~\(*\to B\CC\) is the inclusion of the unique base point~\(\bar w_{0}\).)

  By induction on the size of~\(\Sigma\) it follows from the Künneth and Mayer--Vietoris theorems that each map is a quasi-iso\-mor\-phism.
  An analogous argument based on the Seifert--van Kampen theorem shows that each map induces an isomorphism
  on fundamental groups and therefore is a homotopy equivalence.
\end{proof}

Let \(Y_{\Sigma,L}\) be the pull-back of the universal bundle~\(EL\to BL\) along the composition~\(\DJ_{\Sigma}\hookrightarrow BG\to BL\).

\begin{lemma}
  \label{thm:X-Y}
  There is a zigzag of \(L\)-equivariant maps connecting \(X_{\Sigma}\) and~\(Y_{\Sigma,L}=\DJ_{\Sigma}\timesover{BL}EL\).
  It consists of homotopy equivalences of spaces and is natural in~\(\Sigma\).
\end{lemma}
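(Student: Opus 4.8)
The plan is to build the zigzag by combining \Cref{thm:homotopy-DJ} with \Cref{eq:equiv-koszul}, pulling both sides back along the universal bundle \(EL\to BL\). First I would recall that by the previous two lemmas there is a zigzag of homotopy equivalences over \(BG\), hence over \(BL\),
\begin{equation}
  (X_{\Sigma})_{L} \xleftarrow{\ \sim\ } (Z_{\Sigma})_{G} \xleftarrow{\ \sim\ } \ZK_{\Sigma}\bigl(E\CC\timesunder{\CC}D^{1},*\bigr) \xrightarrow{\ \sim\ } \DJ_{\Sigma},
\end{equation}
all maps being compatible with the structure maps to \(BL\). Since pulling back the fibration \(EL\to BL\) along a homotopy equivalence of spaces over \(BL\) yields a homotopy equivalence of total spaces (the induced map on fibres is the identity on \(EL\)), applying \(\,\cdot\timesover{BL}EL\) to this zigzag produces a zigzag of homotopy equivalences
\begin{equation}
  (X_{\Sigma})_{L}\timesover{BL}EL \xleftarrow{\ \sim\ } \cdots \xrightarrow{\ \sim\ } \DJ_{\Sigma}\timesover{BL}EL = Y_{\Sigma,L},
\end{equation}
all of whose maps are moreover \(L\)-equivariant, because the right \(L\)-action on each pull-back comes from the \(EL\) factor and is preserved by maps over \(BL\). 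Finally, \Cref{eq:equiv-koszul} applied with \(\GG=L\) and \(X=X_{\Sigma}\) gives a natural \(L\)-equivariant homotopy equivalence
\begin{equation}
  (X_{\Sigma})_{L}\timesover{BL}EL = \bigl(EL\timesunder{L}X_{\Sigma}\bigr)\timesover{BL}EL \xrightarrow{\ \sim\ } X_{\Sigma},
\end{equation}
which we splice onto the left end of the zigzag. Concatenating yields the asserted zigzag between \(X_{\Sigma}\) and \(Y_{\Sigma,L}\).

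Naturality in \(\Sigma\) is inherited at every stage: the zigzag of \Cref{thm:homotopy-DJ} is natural by construction, the pull-back functor \(\,\cdot\timesover{BL}EL\) is natural in the space over \(BL\), and the map of \Cref{eq:equiv-koszul} is natural in the \(L\)-space. For an inclusion \(\Sigma'\hookrightarrow\Sigma\) all squares commute on the nose since every map in sight is defined by a universal construction on polyhedral products, Borel constructions, or pull-backs, none of which involves choices.

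The one point requiring slight care — and the main (minor) obstacle — is verifying that all maps in the pulled-back zigzag are genuinely \emph{homotopy equivalences of spaces} and not merely weak equivalences or maps that are only equivariantly nice. For this I would note that \(EL\to BL\) is a Kan fibration of simplicial sets, so the pull-back along a map of Kan complexes is again a Kan fibration; pulling back a homotopy equivalence of Kan complexes over the base along such a fibration preserves homotopy equivalences (one can see this directly since the induced map on each fibre is \(\mathrm{id}_{EL}\), or invoke properness of the Kan–Quillen model structure together with the fact that weak equivalences between Kan complexes are homotopy equivalences). The equivariance is then automatic, as the \(L\)-action is carried solely by the \(EL\)-factor and every map over \(BL\) commutes with it. This completes the argument.
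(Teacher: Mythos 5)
Your proposal is correct and follows essentially the same route as the paper: the paper's proof likewise combines \Cref{eq:equiv-koszul} with the two preceding lemmas, pulling the zigzag over \(BL\) back along \(EL\to BL\) and justifying that the pulled-back maps remain (equivariant) homotopy equivalences via the long exact sequence of homotopy groups of a bundle, which is the same point you settle by right properness of the Kan--Quillen model structure plus the fact that weak equivalences of Kan complexes are homotopy equivalences. The only slip is the parenthetical ``the induced map on fibres is the identity on \(EL\)'': the fibre of \(A\timesover{BL}EL\to A\) is the fibre \(L\) of \(EL\to BL\), not \(EL\), but your fallback justification is the correct one.
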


\begin{proof}
  This is a consequence of \Cref{eq:equiv-koszul}, the two previous lemmas and the long exact sequence of homotopy groups of a bundle,
  \cf~\cite[Thm.~7.6, Prop.~18.4]{May:1968}.
\end{proof}

\subsubsection{Second step}
\label{sec:step-2}

Motivated by \Cref{thm:X-Y}, we study the spaces~\(Y_{\Sigma,L}\) in more detail,
in particular the special case~\(Y_{\Sigma}=Y_{\Sigma,G}\).

The group~\(G\) acts freely on~\(Y_{\Sigma}\). For any~\(\sigma\in\Sigma\) we have
\begin{equation}
  \label{eq:Y-sigma}
  Y_{\sigma} = \ZK_{\Sigma}(E\CC,\CC) = (E\CC)^{\sigma} \times \CC^{[m]\setminus\sigma}
  = \bigtimes_{j=1}^{m} Y_{\restr{\sigma}{j}}.
\end{equation}
Here we have written \(\restr{\sigma}{j}\) for the restriction of a simplex~\(\sigma\in\Sigma\)
to the vertex~\(j\in[m]\), which contains either the empty simplex only
or additionally the \(0\)-simplex~\(\{j\}\). Accordingly, \(Y_{\restr{\sigma}{j}}\) equals \(\CC\) or~\(E\CC\).
The decomposition~\eqref{eq:Y-sigma} is equivariant if we let the \(j\)-th factor of~\(G=\CC^{m}\)
act on the corresponding factor~\(Y_{\restr{\sigma}{j}}\).

We introduce several dgcs related to~\(\Sigma\).
For~\(\sigma\in\Sigma\) we define the \(G\)-dgc
\begin{align}
  \label{eq:def-M-sigma}
  M(\sigma) &= \bigotimes_{j=1}^{m}\,C(Y_{\restr{\sigma}{j}})
  \shortintertext{and based on this the \(G\)-dgc}
  \label{eq:def-M-Sigma}
  M(\Sigma) &= \colim_{\sigma\in\Sigma} M(\sigma)
  \shortintertext{as well as the \(L\)-dgc}
  M(\Sigma,L) &= M(\Sigma)/K.
\end{align}

Let us describe these objects explicitly. Forgetting the dgc structure, we have an isomorphism of graded \(L\)-modules
\begin{equation}
  \label{eq:def-M}
  M(\Sigma,L) = \SRC{\Sigma} \otimes \kk[L]
\end{equation}
where \(\kk[L]\) denotes the group algebra of~\(L\).
To see this, consider first the case~\(L=G\).
For~\(m=1\) and~\(\sigma=\emptyset\) our claim is clear. For~\(m=1\) and~\(\sigma=\{1\}\)
it follows from the canonical \(G\)-equivariant isomorphisms
\begin{equation}
  C(E\CC) = C(B\CC)\otimes\kk[\CC] = \SRC{\sigma}\otimes\kk[\CC],
\end{equation}
compare the discussion in \Cref{sec:universal}.
When passing to~\(m>1\), we first look again at a simplex~\(\sigma\in\Sigma\),
where we take tensor products as in~\eqref{eq:def-SRC-sigma} and~\eqref{eq:def-M-sigma} and reorder the factors.
For arbitrary~\(\Sigma\) we take colimits as in~\eqref{eq:def-SRC-Sigma} and~\eqref{eq:def-M-Sigma}
to obtain a \(G\)-equivariant isomorphism
\begin{equation}
  M(\Sigma) = \SRC{\Sigma} \otimes \kk[G]
\end{equation}
where on the right-hand side \(G\) acts on~\(\kk[G]\). For general~\(L\) we finally divide by~\(K\).

The induced differential on the right-hand side of~\eqref{eq:def-M} is given by
\begin{equation}
  \label{eq:diff-M}
  d(u_{\alpha}\otimes g) = d\,u_{\alpha}\otimes g
  + \sum_{\alpha_{j}>0} (-1)^{\epsilon(\alpha,j-1)}\,u_{\alpha|j} \otimes \bigl(1+ (-1)^{\alpha_{j}}\,\posub{j}\bigr)\,g
\end{equation}
where \(\epsilon(\alpha,j)\) has been defined in~\eqref{eq:def-epsilon-alpha-j},
and \(\posub{1}\),~\dots,~\(\posub{m}\) are the images of the canonical generators of~\(G=\CC^{m}\)
under the projection~\(p\colon G\to L\).
The diagonal is given by
\begin{equation}
  \label{eq:diag-M}
  \Delta(u_{\alpha}\otimes g) = \sum_{\beta+\gamma=\alpha} (-1)^{\epsilon(\beta,\gamma)}\,u_{\beta}\otimes \posup{\gamma}\,g
\end{equation}
for any~\(\alpha\in\N^{m}\) with~\(\supp\alpha\in\Sigma\) and~\(g\in G\),
where \(\epsilon(\beta,\gamma)\) is defined in~\eqref{eq:def-epsilon-beta-gamma} and
\begin{equation}
  \posup{\gamma} \coloneqq \posub{1}^{\gamma_{1}}\cdots \posub{m}^{\gamma_{m}}\in L.
\end{equation}
Later on we will only need the special cases
\begin{align}
  \label{eq:d-uj-g}
  d(u_{j}\otimes g) &= 1\otimes (1-\posub{j})\,g, \\
  \label{eq:Delta-1-g}
  \Delta(1\otimes g) &= (1\otimes g) \otimes (1\otimes g), \\
  \label{eq:Delta-uj-g}
  \Delta(u_{j}\otimes g) &= (u_{j}\otimes g) \otimes (1\otimes g) + (1\otimes \posub{j}\,g) \otimes (u_{j}\otimes g)
\end{align}
for~\(1\le j\le m\).

\begin{proposition}
  \label{thm:M-Y}
  There is a homotopy equivalence of \(L\)-complexes
  \begin{equation*}
    \phi_{\Sigma,L}\colon M(\Sigma,L)\to C(Y_{\Sigma,L})
  \end{equation*}
  that is a dgc map and natural in~\(\Sigma\) as well as~\(L\).
\end{proposition}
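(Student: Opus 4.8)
The plan is to build the map $\phi_{\Sigma,L}$ by first constructing it on the individual spaces $Y_\sigma$, compatibly with the decomposition~\eqref{eq:Y-sigma}, and then passing to the colimit over $\Sigma$ and the quotient by $K$. For a single vertex $j$ the space $Y_{\restr{\sigma}{j}}$ is either $\CC$ or $E\CC$; in the first case $C(Y_{\restr{\sigma}{j}})=\kk[\CC]$ and in the second $C(E\CC)=C(B\CC)\otimes\kk[\CC]=\SRC{\{j\}}\otimes\kk[\CC]$. In either case the factor of $M(\sigma)$ agrees \emph{on the nose} with $C(Y_{\restr{\sigma}{j}})$, so for each $\sigma$ I would take the iterated shuffle map
\begin{equation*}
  M(\sigma)=\bigotimes_{j=1}^{m} C(Y_{\restr{\sigma}{j}}) \xrightarrow{\ \shuffle\ } C\Bigl(\bigtimes_{j=1}^{m} Y_{\restr{\sigma}{j}}\Bigr) = C(Y_\sigma).
\end{equation*}
Since the shuffle map is a dgc morphism, natural, associative, and (by the Eilenberg--Zilber contraction) a homotopy equivalence, this gives a natural $G$-equivariant dgc homotopy equivalence $M(\sigma)\to C(Y_\sigma)$ for each $\sigma$, and these are compatible with the face inclusions $Y_{\sigma'}\hookrightarrow Y_\sigma$ because the shuffle map is natural in each factor. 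Taking the colimit over $\sigma\in\Sigma$ yields a natural $G$-equivariant dgc map $\phi_\Sigma\colon M(\Sigma)\to C(Y_\Sigma)$, and since $G$ acts freely on $Y_\Sigma$ and $C(-)$ sends the free quotient $Y_\Sigma\to Y_{\Sigma,L}=Y_\Sigma/K$ to the quotient of complexes, dividing by $K$ produces the desired $L$-equivariant dgc map $\phi_{\Sigma,L}\colon M(\Sigma,L)\to C(Y_{\Sigma,L})$.

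Next I would verify that $\phi_{\Sigma,L}$ is a homotopy equivalence of $L$-complexes. For $L=G$ this follows by an induction on the number of simplices of $\Sigma$ of the same shape as in the proof of \Cref{thm:homotopy-DJ}: the colimit defining $M(\Sigma)$ and $Y_\Sigma$ is built by gluing one simplex at a time, and at each stage one has a pushout (Mayer--Vietoris) square to which the Künneth theorem applies on the pieces, so $\phi_\Sigma$ being a quasi-isomorphism on each $M(\sigma)$ (where it is literally the Eilenberg--Zilber shuffle map, a homotopy equivalence) propagates to all of $M(\Sigma)$. A parallel Seifert--van Kampen argument shows $\phi_\Sigma$ induces an isomorphism on $\pi_1$, hence it is a genuine homotopy equivalence of complexes, and actually one can produce an explicit homotopy inverse and homotopies by assembling the Alexander--Whitney map and the Eilenberg--Mac\,Lane homotopies over the colimit, making the equivalence natural. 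To descend from $G$ to $L$, I would use that $K$ acts freely on $Y_\Sigma$, so both $M(\Sigma)\to M(\Sigma,L)$ and $C(Y_\Sigma)\to C(Y_{\Sigma,L})$ are quotients by the free $\kk[K]$-action; since $\phi_\Sigma$ is a $K$-equivariant homotopy equivalence between complexes that are free over $\kk[K]$, it descends to a homotopy equivalence on the quotients (here one may invoke the equivariant comparison, \eg{} via a spectral sequence over $BK$ or directly because a $K$-equivariant homotopy between $\kk[K]$-free complexes passes to the quotient).

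Finally I would record, for later use, the explicit formulas for the differential and diagonal of $M(\Sigma,L)$ under the identification~\eqref{eq:def-M}. These are forced by the computation in \Cref{sec:universal}: the shuffle map carries the generators $w_n$ of $C(E\CC)$ to the chains whose boundary is~\eqref{eq:diff-EH} and whose diagonal is~\eqref{eq:Delta-EH}, and reordering the tensor factors introduces exactly the Koszul signs $\epsilon(\beta,\gamma)$ and $\epsilon(\alpha,j)$ already appearing in~\eqref{eq:Delta-SRC} and~\eqref{eq:diff-SRC}; passing to the quotient by $K$ replaces the generators $\cc_j$ of $G$ by their images $\posub{j}$ in $L$. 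This yields~\eqref{eq:diff-M}, \eqref{eq:diag-M} and in particular the special cases~\eqref{eq:d-uj-g}--\eqref{eq:Delta-uj-g}.

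I expect the main obstacle to be the descent from $G$ to $L$: one must be careful that the homotopy equivalence $\phi_\Sigma$ and its inverse, together with the chain homotopies witnessing the equivalence, can all be chosen $K$-equivariantly so that the whole data passes to the quotient. The colimit-and-induction step is routine given \Cref{thm:homotopy-DJ}, and the formula bookkeeping is mechanical; it is the equivariance of the homotopies under the free $K$-action — guaranteed because the Eilenberg--Zilber maps and the Eilenberg--Mac\,Lane homotopy are natural, hence equivariant — that needs to be stated carefully.
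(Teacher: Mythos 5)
Your construction is essentially the paper's proof: the iterated shuffle map \(M(\sigma)\to C(Y_\sigma)\) on each simplex, equivariance and compatibility with face inclusions coming from the naturality of the Eilenberg--Zilber contraction, passage to the colimit over \(\Sigma\), and then the quotient by \(K\) using \(C(Y_\Sigma)/K=C(Y_\Sigma/K)\) (valid because \(K\) acts freely). Your closing paragraph also identifies the decisive point correctly: what descends along \(/K\) is a homotopy equivalence all of whose data --- inverse and homotopies --- is \(K\)-equivariant, and this is exactly what naturality of \(\shuffle\), \(\AW\) and \(H\) supplies; no \(\kk[K]\)-freeness argument or spectral sequence is needed for that step.

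One step as written is a category error, although harmless because you supply the correct substitute in the same sentence: to upgrade \(\phi_\Sigma\) from a quasi-isomorphism to a homotopy equivalence you invoke ``a parallel Seifert--van Kampen argument'' showing an isomorphism on \(\pi_1\). That argument belongs to \Cref{thm:homotopy-DJ}, where the objects are simplicial sets; here \(\phi_\Sigma\) is a map of chain complexes, for which \(\pi_1\) and van Kampen are meaningless, and a quasi-isomorphism alone would not obviously survive the quotient by \(K\) anyway. The correct mechanism is the one you then mention: the contraction data \((\shuffle,\AW,H)\) is natural, hence \(G\)-equivariant and compatible with face inclusions, so the homotopy-equivalence data extends over all of \(\Sigma\) (the paper organizes this extension as an induction on the number of simplices via short exact Mayer--Vietoris sequences), giving a \(G\)-homotopy equivalence of complexes that descends to the desired \(L\)-homotopy equivalence. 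With the \(\pi_1\) detour deleted, your argument coincides with the paper's.
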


\begin{proof}
  In the case~\(L=G\)
  it follows from the identity~\eqref{eq:Y-sigma} and the Eilenberg--Zilber theorem
  that for any~\(\sigma\in\Sigma\) the shuffle map
  \begin{equation}
    \phi_{\sigma} = \shuffle\colon M(\sigma) = \bigotimes_{j=1}^{m}\, C(Y_{\restr{\sigma}{j}}) \to C(Y_{\sigma})
  \end{equation}  
  is a homotopy equivalence, and it is a map of coalgebras.
  Moreover, the naturality of the Eilenberg--Zilber contraction implies that the homotopy equivalence
  is natural with respect to inclusion of faces~\(\tau\hookrightarrow\sigma\)
  and also equivariant with respect to the action of the finite group~\(G=\CC^{m}\).
  Hence our claims hold if \(\Sigma\) is a simplex.
  
  The case of general~\(\Sigma\) now follows by induction on the number of simplices in~\(\Sigma\),
  using the diagram with exact rows
  \begin{equation}
    \begin{tikzcd}[column sep=small]
      0 \arrow{r} & M(\Sigma'\cap\Sigma'') \arrow[leftrightarrow]{d}{\simeq} \arrow{r} & M(\Sigma')\oplus M(\Sigma'') \arrow[leftrightarrow]{d}{\simeq} \arrow{r} &  M(\Sigma'\cup\Sigma'') \arrow[leftrightarrow]{d} \arrow{r} & 0 \\
      0 \arrow{r} & C(Y_{\Sigma'\cap\Sigma''}) \arrow{r} & C(Y_{\Sigma'})\oplus C(Y_{\Sigma'}) \arrow{r} & C(Y_{\Sigma'\cup\Sigma''}) \arrow{r} & 0 
    \end{tikzcd}
  \end{equation}
  to extend the homotopy equivalence to larger subcomplexes of~\(\Sigma\).

  The \(G\)-homotopy equivalence~\(\phi_{\Sigma}\) thus obtained
  descends to an \(L\)-homotopy equivalence
  \begin{equation}
    \phi_{\Sigma,L}\colon M(\Sigma,L) = M(\Sigma)/K \to C(Y_{\Sigma})/K = C(Y_{\Sigma}/K) = C(Y_{\Sigma,L}),
  \end{equation}
  which is again a morphism of dgcs and natural in~\(\Sigma\).
  Naturality with respect to~\(L\) is clear by construction.
\end{proof}

As a result, we see that the \(L\)-dga
\begin{equation}
  \BB(\Sigma,L)=M(\Sigma,L)^{*}
\end{equation}
dual to the \(L\)-dgc~\(M(\Sigma,L)\)
is naturally isomorphic to~\(C^{*}(Y_{\Sigma,L})\), hence naturally quasi-isomorphic to~\(C^{*}(X_{\Sigma})\) by \Cref{thm:X-Y}.

\begin{corollary}
  Then there is a natural quasi-iso\-mor\-phism of dgas
  \begin{equation*}
    C^{*}(\DJ_{\Sigma}) \to \kk[\Sigma].
  \end{equation*}
  In particular, if the characteristic of~\(\kk\) is \(2\), then \(\DJ_{\Sigma}\) is formal over~\(\kk\).
\end{corollary}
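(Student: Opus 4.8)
The plan is to produce the quasi-isomorphism $C^{*}(\DJ_{\Sigma}) \to \kk[\Sigma]$ by dualizing a quasi-isomorphism of dgcs going the other way and combining it with the homotopy equivalences already established in \Cref{thm:homotopy-DJ} and \Cref{thm:M-Y}. First I would observe that specializing to~$L=G$ and then forgetting the $G$-action, \Cref{thm:M-Y} gives a natural dgc homotopy equivalence $\phi_{\Sigma}\colon M(\Sigma)\to C(Y_{\Sigma})$, and \Cref{thm:homotopy-DJ} gives a zigzag of homotopy equivalences connecting $(Z_{\Sigma})_{G}$ and~$\DJ_{\Sigma}$; since $Y_{\Sigma}=Y_{\Sigma,G}=\DJ_{\Sigma}\timesover{BG}EG$ maps to $(Z_{\Sigma})_{G}$ by \Cref{eq:equiv-koszul} (or rather, the analogue of \Cref{thm:X-Y} with $K$ trivial identifies $Y_{\Sigma}$ with $(Z_{\Sigma})_{G}$ up to homotopy), one gets that $C(Y_{\Sigma})$ and $C(\DJ_{\Sigma})$ are naturally quasi-isomorphic as complexes. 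The point of the corollary, however, is the \emph{multiplicative} statement, so I would instead argue at the level of the dga $\BB(\Sigma)=\BB(\Sigma,G)=M(\Sigma)^{*}$: we have already noted that $\BB(\Sigma,L)$ is naturally isomorphic to $C^{*}(Y_{\Sigma,L})$, so in particular $C^{*}(Y_{\Sigma})\cong\BB(\Sigma)=M(\Sigma)^{*}$, and the zigzag of \Cref{thm:homotopy-DJ} is a zigzag of \emph{spaces over $BG$}, hence induces a zigzag of dga quasi-isomorphisms between $C^{*}(\DJ_{\Sigma})$ and $C^{*}(Y_{\Sigma})\cong M(\Sigma)^{*}$.

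The remaining, and genuinely new, step is to identify $M(\Sigma)^{*}$ — or rather its cohomology, together with a natural multiplicative map — with $\kk[\Sigma]$. The idea is that the summand $1\otimes\kk[G]=\FU{G}$ inside $M(\Sigma)^{*}$ (dual to $\SRC{\Sigma}\otimes\kk[G]$ restricted to $\alpha=0$) carries the extra homological information coming from $EG$, and that the natural augmentation $\kk[G]\to\kk$, $g\mapsto 1$, together with the projection $\SRC{\Sigma}\otimes\kk[G]\to\SRC{\Sigma}\otimes\kk=\SRC{\Sigma}$ dualizes to a dga inclusion $\kk[\Sigma]\hookrightarrow M(\Sigma)^{*}$. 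Alternatively, and more likely what the author intends, one constructs directly the augmentation-type dga map $M(\Sigma)^{*}\to\kk[\Sigma]$ dual to the coalgebra map $\SRC{\Sigma}\to\SRC{\Sigma}\otimes\kk[G]=M(\Sigma)$, $u_{\alpha}\mapsto u_{\alpha}\otimes 1$; one checks using \eqref{eq:diff-M} and \eqref{eq:diag-M} that $u_{\alpha}\mapsto u_{\alpha}\otimes 1$ is a morphism of dgcs (the differential \eqref{eq:diff-M} applied to $u_{\alpha}\otimes 1$ produces terms $u_{\alpha|j}\otimes(1+(-1)^{\alpha_{j}}\posub{j})$, which do not lie in the image, so this needs a small argument — either one passes to the quotient dgc $M(\Sigma)\otimes_{\kk[G]}\kk$, which \emph{is} $\SRC{\Sigma}$ with its differential \eqref{eq:diff-SRC}, and notes this quotient map is a dgc morphism and a quasi-isomorphism because $\kk[G]$ is free and $EG$ is contractible). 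Dualizing the quasi-isomorphism $M(\Sigma)\to M(\Sigma)\otimes_{\kk[G]}\kk=\SRC{\Sigma}$ then gives the desired natural dga quasi-isomorphism $C^{*}(\DJ_{\Sigma})\simeq M(\Sigma)^{*}\to\SRC{\Sigma}^{*}=\kk[\Sigma]$.

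I would check that this map is a quasi-isomorphism by the same induction-over-subcomplexes argument used in \Cref{thm:M-Y}: for $\Sigma$ a single simplex it reduces to the statement that $C(E\CC)\to C(B\CC)$, i.e. $\SRC{\sigma}\otimes\kk[\CC]\to\SRC{\sigma}$, is a quasi-isomorphism, which is immediate since $E\CC$ is contractible and $\SRC{\sigma}\otimes\kk[\CC]=C(E\CC)$ computes its (co)homology; the Künneth and Mayer–Vietoris arguments then propagate this to all $\Sigma$, exactly as in the diagram with exact rows in the proof of \Cref{thm:M-Y}. The final sentence of the corollary follows at once: when $\Char\kk=2$ the differential \eqref{eq:diff-tj} on $\kk[\Sigma]$ vanishes (the coefficient $-2$ is zero), so $\kk[\Sigma]=H^{*}(\kk[\Sigma])=H^{*}(\DJ_{\Sigma})$ with zero differential, exhibiting the quasi-isomorphism $C^{*}(\DJ_{\Sigma})\to\kk[\Sigma]$ as a formality quasi-isomorphism. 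I expect the main obstacle to be bookkeeping: verifying that the projection $M(\Sigma)\to\SRC{\Sigma}$ (equivalently the inclusion $\kk[\Sigma]\to M(\Sigma)^{*}$) really is compatible with \emph{all} the structure — differential \eqref{eq:diff-M} versus \eqref{eq:diff-SRC}, and especially the coproduct \eqref{eq:diag-M} versus \eqref{eq:Delta-SRC}, so that the dual is genuinely multiplicative — and that the whole construction is natural in $\Sigma$; but all of these are forced by the identity $M(\Sigma)=\SRC{\Sigma}\otimes\kk[G]$ as dgcs together with the fact that $\kk[G]\to\kk$ is an augmentation of dgcs.
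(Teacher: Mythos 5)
There are two genuine errors here, and together they undermine the construction. First, you identify $Y_{\Sigma}=Y_{\Sigma,G}=\DJ_{\Sigma}\timesover{BG}EG$ with the Borel construction $(Z_{\Sigma})_{G}$. That is backwards: \Cref{thm:X-Y} with $K=1$ (equivalently, \Cref{eq:equiv-koszul} combined with \Cref{thm:homotopy-DJ}) identifies $Y_{\Sigma,G}$ with $X_{\Sigma}=Z_{\Sigma}$ itself up to homotopy, whereas it is $(Z_{\Sigma})_{G}$ that is homotopy equivalent to $\DJ_{\Sigma}$. Hence $M(\Sigma)^{*}\cong C^{*}(Y_{\Sigma})$ is a model for $C^{*}(Z_{\Sigma})$, not for $C^{*}(\DJ_{\Sigma})$; already for $\Sigma$ the single vertex $\{1\}$ one has $Z_{\Sigma}=D^{1}$ contractible while $\DJ_{\Sigma}=B\CC\simeq\RP^{\infty}$, so no natural quasi-isomorphism between these cochain algebras can exist over $\Z$ or $\Z_{2}$. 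Second, the map $M(\Sigma)\to M(\Sigma)\otimes_{\kk[G]}\kk=\SRC{\Sigma}$ is indeed a morphism of dgcs (it is exactly the quotient map $M(\Sigma)\to M(\Sigma)/G=M(\Sigma,1)$, corresponding to the projection $Y_{\Sigma}\to Y_{\Sigma}/G=\DJ_{\Sigma}$), but it is \emph{not} a quasi-isomorphism, and no ``small argument'' can make it one: for the single vertex it is the map $C(E\CC)\to C(B\CC)$, whose source is acyclic while the target has the homology of $\RP^{\infty}$. Coinvariants of a complex of free $\kk[G]$-modules compute the homology of the quotient (group homology), not that of the original complex, so the contractibility of $EG$ gives no help here. (There is also a direction slip: the dual of a map $M(\Sigma)\to\SRC{\Sigma}$ is a map $\kk[\Sigma]\to M(\Sigma)^{*}$, not a map out of $M(\Sigma)^{*}$.)

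The corollary needs none of this. It is the special case $L=1$ of \Cref{thm:M-Y}: for the trivial group one has $K=G$, $Y_{\Sigma,1}=\DJ_{\Sigma}$, $M(\Sigma,1)=M(\Sigma)/G=\SRC{\Sigma}$ and $\BB(\Sigma,1)=\kk[\Sigma]$, so $\phi_{\Sigma,1}\colon\SRC{\Sigma}\to C(\DJ_{\Sigma})$ is already a natural dgc homotopy equivalence, and dualizing it gives the natural dga quasi-isomorphism $C^{*}(\DJ_{\Sigma})\to\kk[\Sigma]$. In other words, the quotient dgc $\SRC{\Sigma}$ is compared with $C(\DJ_{\Sigma})$ directly by the proposition already proved, rather than being compared with $M(\Sigma)$, which models the different space $Z_{\Sigma}$. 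Your final step is fine: in characteristic $2$ the differential on $\kk[\Sigma]$ vanishes and the relations reduce to the usual Stanley--Reisner relations, so the quasi-isomorphism exhibits the formality of $\DJ_{\Sigma}$.
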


This last part is of course analogous to the integral formality of the (complex) Davis--Januszkiewicz space~\(\ZK_{\Sigma}(BS^{1},*)\)
established by the author~\cite[Thm.~3.3.2]{Franz:2001}, \cite[Thm.~1.4]{Franz:2006} and Notbohm--Ray~\cite[Thm.~4.8]{NotbohmRay:2005}.

\begin{proof}
  This is the special case~\(L=1\) of \Cref{thm:M-Y}: We have \(Y_{\Sigma,1}=\DJ_{\Sigma}\)
  and~\(M(\Sigma,1)=\SRC{\Sigma}\), hence also \(\BB(\Sigma,1)=\kk[\Sigma]\).
  
  If the characteristic of~\(\kk\) is \(2\), then the face algebra~\(\kk[\Sigma]\)
  is the usual Stanley--Reisner algebra of~\(\Sigma\)
  with commuting generators of degree~\(1\) and without differential,
  which therefore is the cohomology of~\(\DJ_{\Sigma}\).
\end{proof}

\subsubsection{Third step}

To complete the proof of \Cref{thm:main}, we show that \(\BB(\Sigma,L)\)
is naturally isomorphic to the \(L\)-dga~\(A(\Sigma,L)=\kk[\Sigma]\otimes\FU{L}\)
with product and differential as described in the introduction.
Recall that \(\FU{L}\) denotes the \(L\)-algebra of function~\(L\to\kk\).

Dualizing the isomorphism~\eqref{eq:def-M}, we obtain an isomorphism of graded \(L\)-modules
\begin{equation}
  \BB(\Sigma,L) = \kk[\Sigma] \otimes \FU{L} = A(\Sigma,L),
\end{equation}
natural in~\(\Sigma\). We have
\begin{equation}
  \pair{t^{\alpha}\otimes f,u_{\beta}\otimes g} =
  \begin{cases}
    f(g) & \text{if \(\alpha=\beta\),} \\
    0 & \text{otherwise}
  \end{cases}
\end{equation}
for all multi-indices~\(\alpha\),~\(\beta\) with support in~\(\Sigma\), all~\(g\in L\) and~\(f\in\FU{L}\).
It remains to show that the product~\(*\) and the differential on~\(\BB(\Sigma,L)\) agree with those of~\(A(\Sigma,L)\).
Motivated by the identity~\eqref{eq:prod-tj-f} below, we also write
\(t^{\alpha}\,f=t^{\alpha}\otimes f\in\BB(\Sigma,L)\).

We observe that the surjection of dgcs
\begin{equation}
  M(\Sigma,L) \to M(\Sigma,1) = \SRC{\Sigma}
\end{equation}
induced by the projection~\(L\to 1\) dualizes to an injection of dgas
\begin{equation}
  \kk[\Sigma] = \BB(\Sigma,1) \hookrightarrow \BB(\Sigma,L).
\end{equation}
Each generator~\(t_{j}\in\kk[\Sigma]\) pulls back to~\(t_{j}\in\BB(\Sigma,L)\) under this map.
Hence \(\kk[\Sigma]\) is a sub-dga of~\(\BB(\Sigma,L)\),
and the relations~\eqref{eq:diff-tj},~\eqref{eq:tj-tjp} and~\eqref{eq:t-prod-missing}
hold in~\(\BB(\Sigma,L)\).

From the formulas~\eqref{eq:Delta-1-g} and~\eqref{eq:Delta-uj-g} we can infer
\begin{equation}
  \label{eq:prod-tj-f}
  t_{j} * f = t_{j}\,f
  \qquad\text{and}\qquad
  f*h = f\,h
\end{equation}
for~\(1\le j\le m\) and~\(f\),~\(h\in\FU{L}\).
In particular, \(\FU{L}\) is a subalgebra of~\(\BB(\Sigma,L)\).

It remains to verify the formulas for the differential of the generators~\(s_{i}\)
and for the commutation relations among the~\(t_{j}\)'s and~\(s_{i}\)'s.
We observe the following:
Multiplying some element of~\(L\cong(\Z_{2})^{n}\) by~\(\posub{j}\) changes the \(i\)-th coordinate
if and only if \(x_{ij}=1\). Hence \(s_{i}\cdot\posub{j} = 1-s_{i}\) if \(x_{ij}=1\)
and \(s_{i}\cdot\posub{j} = s_{i}\) otherwise.
The commutation relation~\eqref{eq:rel-si-tj} can therefore be expressed as
\begin{equation}
  \label{eq:rel-f-tj-poj}
  f\,t_{j} = t_{j}\,f\cdot\posub{j}
\end{equation}
for all~\(1\le j\le m\) and~\(f=s_{i}\), hence for all elements~\(f\) of the \(L\)-algebra~\(\FU{L}\).

From~\eqref{eq:Delta-uj-g} we deduce
\begin{equation}
  \pair{f * t_{j},u_{j'}\otimes g}
  = \bigpair{f\otimes t_{j}, \Delta(u_{j'}\otimes g)}
  = \begin{cases}
    f(\posub{j}\,g) & \text{if \(j= j'\),} \\
    0 & \text{otherwise.}
  \end{cases}
\end{equation}
Because this is equivalent to~\eqref{eq:rel-f-tj-poj},
the products on~\(A(\Sigma,L)\) and~\(\BB(\Sigma,L)\) agree.

Moreover, the identity~\eqref{eq:d-uj-g} implies
\begin{align}
  \pair{d\,f,u_{j}\otimes g} &= - \pair{f,d(u_{j}\otimes g)}
  = - \bigpair{1\otimes f, 1\otimes (1-\posub{j})\,g} \\
  \notag &= f\bigl(\posub{j}\,g-g\bigr)
  = \bigpair{t_{j}\,f\cdot(\posub{j} - 1),u_{j}\otimes g}
\end{align}
for any~\(f\in\FU{L}\) and~\(1\le j\le m\), which shows
\begin{equation}
  d\,f = \sum_{j=1}^{m} t_{j}\,f\cdot(\posub{j} - 1).
\end{equation}
Combined with~\eqref{eq:rel-f-tj-poj} this gives the second formula in~\eqref{eq:intro:diff-si-tj}
and completes the proof of \Cref{thm:model-rts}.

\begin{remark}
  In the case of a real moment-angle complex~\(X_{\Sigma}=Z_{\Sigma}\) itself
  we can choose the characteristic matrix~\((x_{ij})\) to be the identity matrix.
  Let the dga~\(B(\Sigma)\) be the quotient of~\(A(\Sigma,G)\) by the additional relations
  \begin{equation}
    s_{i}\,t_{i} = t_{i}\,t_{i} = 0
  \end{equation}
  for~\(1\le i\le n\). We claim that the projection~\(A(\Sigma,G)\to B(\Sigma)\) is a quasi-iso\-mor\-phism.
  For~\(m=1\) this is checked directly. The Künneth theorem then proves the case where \(\Sigma\) is a simplex.
  For general~\(\Sigma\) one does induction over the size of~\(\Sigma\) as in the proof of \Cref{thm:M-Y}.
  Up to the sign in the formula~\(d\,s_{i}=-t_{i}\), we therefore recover Cai's description of the cohomology ring
  of a real moment-angle complex \cite[p.~514]{Cai:2017}.
  The minus sign can be eliminated by replacing each~\(t_{i}\) with~\(\tilde{t}_{i}=-t_{i}\).
\end{remark}

\subsection{Expressing the cohomology as a torsion product}

In this section we assume that the coefficient ring~\(\kk\) is of characteristic~\(2\).

The chosen decomposition~\(L\cong(\Z_{2})^{n}\) gives rise to an
isomorphism of graded algebras~\(R=H^{*}(BL)\cong\kk[\nn{t}_{1},\dots,\nn{t}_{n}]\).
The map~\(Bp^{*}\colon H^{*}(BL)\to H^{*}(BG)\) is of the form
\begin{equation}
  \nn{t}_{i} \mapsto \sum_{j=1}^{m} x_{ij}\,t_{j}.
\end{equation}
The Stanley--Reisner ring~\(\kk[\Sigma]\) is an algebra over~\(R\) via the map~\(Bp^{*}\).

\begin{proposition}
  \label{thm:iso-Tor}
  If the characteristic of~\(\kk\) is \(2\), then there is an isomorphism of graded vector spaces
  \begin{equation*}
    H^{*}(X_{\Sigma}) = \Tor_{R}(\kk,\kk[\Sigma]),
  \end{equation*}
  natural in~\(\Sigma\).
\end{proposition}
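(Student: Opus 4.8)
The plan is to identify the dga $A(\Sigma,L)$, which by \Cref{thm:main} computes $H^{*}(X_{\Sigma})$, with an explicit Koszul-type resolution computing $\Tor_{R}(\kk,\kk[\Sigma])$, in the special case $\Char\kk=2$. When $\Char\kk=2$, the differential $d\,t_{j}=-2\,t_{j}^{2}$ vanishes and the relations \eqref{eq:rel-mod2} reduce to $s_{i}\,t_{j}=t_{j}\,s_{i}+x_{ij}\,t_{j}$ together with $d\,s_{i}=\sum_{j}x_{ij}\,t_{j}=Bp^{*}(\nn t_{i})$, and $\kk[\Sigma]$ is the ordinary (commutative) Stanley--Reisner ring. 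The first step is to recognize $\FU{L}$ over $\kk$: since $L\cong(\Z_{2})^{n}$ and $\Char\kk=2$, the function algebra $\FU{L}=\kk[s_{1},\dots,s_{n}]/(s_{i}^{2}-s_{i})$, and as a module it has basis the monomials $s^{e}$ with $e\in\{0,1\}^{n}$. Forgetting multiplicative structure, $A(\Sigma,L)=\kk[\Sigma]\otimes\FU{L}$ thus looks like $\kk[\Sigma]\otimes\Lambda[\,\text{$n$ exterior-type generators}\,]$; the goal is to turn it into the standard Koszul complex $\kk[\Sigma]\otimes\Lambda[y_{1},\dots,y_{n}]$ with $dy_{i}=Bp^{*}(\nn t_{i})$, which resolves $\kk$ over $R=\kk[\nn t_{1},\dots,\nn t_{n}]$ and hence computes $\Tor_{R}(\kk,\kk[\Sigma])$.

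Concretely, I would introduce new generators by a change of basis in the $s_{i}$ direction. A clean choice is to set $y_{i}=s_{i}-c_{i}$ for a suitable constant or, better, to use the idempotent decomposition: for each character-coordinate $i$ the element $s_{i}$ squares to itself, so $e_{i}^{+}=s_{i}$ and $e_{i}^{-}=1-s_{i}$ are the two primitive idempotents of the $i$-th tensor factor $\FU{\Z_{2}}$. The differential $d\,s_{i}=\sum_{j}x_{ij}t_{j}$ is then a square-zero derivation (square-zero because $d^{2}s_{i}=\sum_j x_{ij}\,dt_j = 0$ in characteristic~2, and the $t_j$ anticommute), and the commutation relation \eqref{eq:rel-mod2} says precisely that conjugation by $t_{j}$ sends $s_{i}\mapsto s_{i}+x_{ij}$, i.e. $t_{j}$ exchanges $e_{i}^{+}$ and $e_{i}^{-}$ exactly when $x_{ij}=1$. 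Setting $y_{i}=1-2s_{i}=1$ is degenerate in characteristic~2, so instead I would filter: give $A(\Sigma,L)$ the increasing filtration by the number of $s$-factors, whose associated graded is $\kk[\Sigma]\otimes\Lambda[\bar s_{1},\dots,\bar s_{n}]$ with the induced differential $d\bar s_{i}=\sum_{j}x_{ij}t_{j}$ and with the $\bar s_{i}$ now strictly anticommuting with the $t_{j}$ and each other (the correction terms in \eqref{eq:rel-mod2} raise filtration or are lower order). This associated graded is manifestly the Koszul complex $K_{R}(\kk)\otimes_{R}\kk[\Sigma]$ computing $\Tor_{R}(\kk,\kk[\Sigma])$.

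It then remains to show the spectral sequence of this filtration collapses, equivalently that the inclusion of the associated graded differential into the full differential induces an isomorphism on cohomology — or, more directly, to exhibit an explicit quasi-isomorphism $A(\Sigma,L)\to\kk[\Sigma]\otimes_{R}K$. I expect the cleanest route, and the one matching the style of the paper, is to avoid the filtration bookkeeping entirely and instead run the same kind of inductive argument used in \Cref{thm:M-Y}: both $H^{*}(A(\Sigma,L))$ and $\Tor_{R}(\kk,\kk[\Sigma])$ are functors of $\Sigma$ sending unions to Mayer--Vietoris sequences (the former because $A(-,L)$ is built by colimits over the face poset and is free, so exact sequences $0\to\kk[\Sigma'\cap\Sigma'']\to\kk[\Sigma']\oplus\kk[\Sigma'']\to\kk[\Sigma'\cup\Sigma'']\to0$ stay exact after $\otimes\FU{L}$; the latter by the long exact $\Tor$ sequence applied to the same short exact sequence of $R$-modules); and both agree when $\Sigma$ is a simplex, where $\kk[\Sigma]$ is a polynomial ring, $\Tor$ is computed by a Koszul complex, and one checks the $m=1$ case by hand and multiplies via Künneth. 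A five-lemma comparison over the face poset of $\Sigma$ then gives the natural isomorphism. The main obstacle is the base case: constructing, for a simplex, a natural chain map $A(\Sigma,L)\to\kk[\Sigma]\otimes_{R}K_{R}(\kk)$ realizing the isomorphism and verifying it is compatible with the Mayer--Vietoris connecting maps, so that naturality in $\Sigma$ — needed to make the inductive step go through — actually holds on the nose rather than just after taking cohomology.
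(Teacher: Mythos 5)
Your overall strategy --- identify \(A(\Sigma,L)\) with a Koszul-type complex so that its cohomology becomes \(\Tor_{R}(\kk,\kk[\Sigma])\) --- is the same as the paper's, but both of the routes you sketch stop exactly at the decisive step, so as written there is a genuine gap. In the filtration route, knowing that the associated graded of the filtration by the number of \(s\)-factors is the Koszul complex \(\KK\otimes_{R}\kk[\Sigma]\) only produces a spectral sequence converging to \(H^{*}(A(\Sigma,L))\) whose first nontrivial page is that Koszul complex and whose next page is \(\Tor_{R}(\kk,\kk[\Sigma])\); you give no argument that the later differentials vanish, and without that you only get a dimension inequality, not the isomorphism. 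In the Mayer--Vietoris route, the observation that both functors have Mayer--Vietoris sequences and agree on simplices is not sufficient: to run a five-lemma induction you need a natural comparison map (or zigzag) at the chain level, compatible with the restriction maps --- precisely the ``main obstacle'' you name and then leave open. (Your base case is also not just a vertexwise K\"unneth argument: for a face \(\sigma\) with a general characteristic matrix, \(A(\sigma,L)\) does not factor over the vertices, since \(\FU{L}\) couples them; one has to use the freeness of \(K\), i.e.\ the linear independence of the columns of \((x_{ij})\) indexed by \(\sigma\), together with a coordinate change on \(L\).)

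For comparison, the paper's proof uses neither a filtration nor an induction over \(\Sigma\): it sets \(\KK=A(\nn{\Sigma})\) for the full simplex \(\nn{\Sigma}=[n]\) with the \emph{identity} characteristic matrix, where every column has a single unit, so the monomials \(s_{\sigma}\) satisfy \(d\,s_{\sigma}=\sum_{i\in\sigma}\nn{t}_{i}\,s_{\sigma\setminus i}\) on the nose and \(\KK\) is literally the Koszul resolution of \(\kk\) over \(R\); it then identifies \(A(\Sigma,L)\) with \(\KK\otimes_{R}\kk[\Sigma]\), naturally in \(\Sigma\), and takes cohomology. Be aware, though, that the cross terms you were (rightly) uneasy about reappear exactly in this base-change step: in \(A(\Sigma,L)\) one has \(d(s_{i}s_{i'})=Bp^{*}(\nn{t}_{i})\,s_{i'}+Bp^{*}(\nn{t}_{i'})\,s_{i}+\sum_{j}x_{ij}x_{i'j}\,t_{j}\), so matching the monomial bases is \emph{not} a chain map as soon as some column of \((x_{ij})\) has two or more units (this already happens for \(\RP^{2}\)); the paper asserts the identification from the single relation \(d\,s_{i}=Bp^{*}(\nn{t}_{i})\) and does not spell out how these terms are absorbed. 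So your instinct about where the difficulty sits is sound, but your proposal does not yet resolve it: to complete it you must either prove the collapse of your spectral sequence (e.g.\ by a perturbation-type argument) or actually construct the natural chain-level comparison needed for the Mayer--Vietoris induction.
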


\begin{proof}
Let \(\nn{\Sigma}=[n]\) be the full simplex on \(n=m\)~vertices.
The differential graded \(R\)-module~\(\KK= A(\nn{\Sigma})\) is free over~\(R\) as a graded module
with basis
\begin{equation}
  s_{\sigma} = \prod_{i\in\sigma} s_{i}
\end{equation}
for~\(\sigma\subset[n]\). This together with the identity~\(d\,s_{i}=\nn{t}_{i}\) shows
that \(\KK\) is in fact the Koszul resolution of~\(\kk\) over~\(R\).
(Since \(X_{\nn{\Sigma}}=(D^{1})^{n}\) is contractible, \Cref{thm:model-rts}
confirms that \(\KK\) is acyclic.)

The differential~\eqref{eq:rel-mod2} of~\(s_{i}\) in~\(A(\Sigma,L)\) is
the image of~\(\nn{t}_{i}\) under the map~\(Bp^{*}\),
which means that we have an isomorphism of chain complexes
\begin{equation}
  A(\Sigma,L) = \KK \otimes_{R} \kk[\Sigma],
\end{equation}
natural in~\(\Sigma\).
Taking cohomology concludes the proof.
\end{proof}

\begin{remark}
  \label{rem:equivformal}
  Note that in general the canonical product on the \(\Tor\)~term does not correspond to the cup product.
  (This fails already for~\(\Sigma=\{\emptyset\}\).)
  It holds, however, if \(\kk[\Sigma]\) is free over~\(R\) because \(H_{G}^{*}(X_{\Sigma})=H^{*}(\DJ_{\Sigma})=\kk[\Sigma]\)
  surjects onto \(H^{*}(X_{\Sigma})=\Tor_{R}(\kk,\kk[\Sigma])=\kk\otimes_{R}\kk[\Sigma]\) in this case, compare~\cite[Prop.~7.1.6]{Hausmann:2014}.
  This happens for smooth real projective toric varieties or, more generally, for small covers with shellable~\(\Sigma\).
  We thus recover the description of their cohomology rings as given by Jurkiewicz~\cite[Thm.~4.3.1]{Jurkiewicz:1985}
  and Davis--Januszkiewicz~\cite[Thm.~4.14]{DavisJanuszkiewicz:1991}.
\end{remark}

\section{Toric varieties}
\label{sec:toric}

Let \(N\cong\Z^{n}\) be a lattice, and let \(\Sigma\) be a regular rational fan in~\(N_{\R}=N\otimes_{\Z}\R\).
As remarked in the introduction, the associated real toric variety~\(\XXX_{\Sigma}\) can be defined as the fixed point set
of the complex conjugation on the corresponding (complex) toric variety.
It can also be described intrinsically as toric varieties are defined over the integers,
compare~\cite[p.~78]{Fulton:1993} and~\cite[Sec.~2]{Franz:2010}.
In this final part of the paper we study \(\XXX_{\Sigma}\) from a topological point of view (equipped with the metric topology).
Recall that it comes with an action of the group~\(\LLL=\TTT_{N}\cong(\Ri)^{n}\) where \(\Ri=\R\setminus\{0\}\).

Let \(\tN\subset N\) be the sublattice spanned by the primitive generators of the rays in~\(\Sigma\).
We abbreviate \(N_{2}=N\otimes_{\Z}\Z_{2}\), and we write \(N_{2}/\tN_{2}\) for the quotient of~\(N_{2}\)
by the image of the map~\(\tN_{2}\to N_{2}\) induced by the inclusion. Recall that \(N_{2}\) is
canonically isomorphic to the group~\(L\cong(\Z_{2})^{n}\) contained in~\(\LLL\) and therefore also in~\(\XXX_{\Sigma}\).

The following two results have been established by Uma~\cite[Thm.~2.5\,(2), Rem.~7.4]{Uma:2004}
in the special case~\(\tN=N\).

\begin{lemma}
  \label{thm:components}
  The connected components of~\(\XXX_{\Sigma}\) are in bijection with the set~\(N_{2}/\tN_{2}\).
\end{lemma}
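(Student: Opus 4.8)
The plan is to reduce the claim to the well-known description of the (complex) toric variety as a union of affine charts, then track the effect of complex conjugation and connected components. First I would recall that $\XXX_\Sigma$ is covered by the affine pieces $\XXX_\sigma$ for $\sigma\in\Sigma$ a maximal cone, and that the component structure is determined by the maximal cones since lower-dimensional cones glue these pieces together in a connected way: it suffices to understand $\pi_0$ of each $\XXX_\sigma(\R)$ and how the gluing identifies components. The key local fact is that for a single cone $\sigma$ with primitive ray generators $v_{j_1},\dots,v_{j_k}$, the real affine toric variety $\XXX_\sigma(\R)$ is $\LLL$-equivariantly of the form $(\Ri)^{n-k}\times(\text{closure of an orbit in }(\Ri)^k/\text{monomial relations})$, and its set of connected components is a torsor under $N_2/\langle v_{j_1},\dots,v_{j_k}\rangle_2$ — here $\langle\cdot\rangle_2$ denotes the image in $N_2$ of the subgroup generated by the listed vectors. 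This is the real-locus analogue of the statement that $\XXX_\sigma(\C)$ is connected; over $\R$ the sign data in each coordinate direction that is \emph{not} killed by the relations coming from $\sigma$ survives, and these signs are exactly parametrized by $N_2$ modulo the span of the rays of $\sigma$.

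Next I would assemble the local pieces. For an inclusion of cones $\tau\subset\sigma$ the ray set of $\tau$ is contained in that of $\sigma$, so the natural surjection $N_2/\langle\text{rays of }\tau\rangle_2 \twoheadrightarrow N_2/\langle\text{rays of }\sigma\rangle_2$ tells us how components of $\XXX_\tau(\R)$ map to components of $\XXX_\sigma(\R)$. Since $\Sigma$ is a fan, all the ray generators occurring in any cone of $\Sigma$ together span $\tN$, so taking the colimit (union of charts) over the whole fan the component set becomes $N_2/\tN_2$: two points of the dense torus $L=N_2$ lie in the same component of $\XXX_\Sigma(\R)$ iff they differ by an element of $\tN_2$, because a path between them can be built by passing through the charts $\XXX_\sigma(\R)$ and each such passage allows modifying the sign vector precisely by the ray generators of $\sigma$. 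Concretely I would phrase this as: $\pi_0(\XXX_\Sigma(\R))$ is the colimit over $\sigma\in\Sigma$ of the finite sets $N_2/\langle\text{rays of }\sigma\rangle_2$ with the evident surjections, and this colimit is $N_2/\tN_2$ since every ray of $\Sigma$ eventually appears. The $L$-action on $\XXX_\Sigma$ induces an action on $\pi_0$ which is transitive, making $\pi_0(\XXX_\Sigma(\R))$ a torsor under $N_2/\tN_2\cong L/\tN_2$, hence in bijection (non-canonically, or canonically once a basepoint in the identity component is fixed) with $N_2/\tN_2$.

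The main obstacle is establishing the local statement carefully, i.e.\ computing $\pi_0\XXX_\sigma(\R)$ and the naturality of the identification under face inclusions. This requires choosing a splitting $N=\langle v_{j_1},\dots,v_{j_k}\rangle \oplus N'$ is \emph{not} generally possible over $\Z$ when the $v_{j_i}$ do not form part of a basis, so one must instead work with the dual cone and the semigroup ring, or use the smooth/regular case where $\sigma$ being a smooth cone means its rays \emph{do} extend to a $\Z$-basis of $\tN\cap(\R\sigma)$ — but here $\Sigma$ is only assumed regular (each cone smooth), so in fact the rays of each $\sigma$ \emph{are} part of a basis of the sublattice they generate, and $\XXX_\sigma(\R)\cong \R^k\times(\Ri)^{n-k}$ up to the subtlety that the last $n-k$ coordinates are indexed by $N/\langle\text{rays of }\sigma\rangle$, which is a free abelian group of rank $n-k$ only after we note smoothness guarantees no torsion from a single $\sigma$; the torsion that produces the nontrivial $N_2/\tN_2$ arises only from combining several cones. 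I would handle this by citing the intrinsic integral construction of toric varieties (\cite[p.~78]{Fulton:1993}, \cite[Sec.~2]{Franz:2010}) to reduce to the affine charts, then doing the $\Z_2$-bookkeeping of signs of coordinates directly; alternatively, one can invoke Uma's computation \cite[Thm.~2.5\,(2)]{Uma:2004} for the case $\tN=N$ applied to the toric variety of $\Sigma$ regarded as a fan in $\tN_\R$ (which is then connected), and deduce the general case from $\XXX_\Sigma \cong \XXX_{\Sigma,\tN}\timesunder{\tN_2} N_2$ exhibiting $\XXX_\Sigma$ as a $|N_2/\tN_2|$-fold disjoint union of translates of the connected piece $\XXX_{\Sigma,\tN}$.
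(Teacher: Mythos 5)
Your main argument --- computing \(\pi_0\) chart by chart using regularity (\(\XXX_{\sigma}\cong\R^{k}\times(\Ri)^{n-k}\), so its components form a torsor under \(N_{2}/\langle\text{rays of }\sigma\rangle_{2}\)) and then gluing over the fan, with the basepoints in \(L\) making the identifications compatible, to obtain \(N_{2}/\tN_{2}\) --- is essentially the paper's own proof, your colimit of \(\pi_0\)'s replacing the paper's explicit well-definedness/surjectivity/injectivity bookkeeping via the points of \(L\). One caution: drop the alternative route sketched at the end, since the asserted decomposition of \(\XXX_{\Sigma}\) as a disjoint union of \(|N_{2}/\tN_{2}|\) translates of \(\XXX_{\Sigma,\tN}\) is not correct as stated (in the paper's example with rays \([1,0]\) and \([-1,2]\), the map \(\XXX_{\Sigma,\tN}\to\XXX_{\Sigma}\) is two-to-one onto a single component, so each component is a quotient of \(\XXX_{\Sigma,\tN}=\R^{2}\setminus\{0\}\) by a free \(\Z_{2}\)-action rather than an embedded translate, and connected torus factors are missing when the rays do not span \(N_{\R}\)); the correct statement of this kind is \Cref{thm:cox-real}, whose proof in the paper invokes the present lemma, so that route would be circular.
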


\begin{proof}
  We show that each component~\(W\subset\XXX_{\Sigma}\) contains a point~\(g\in L\)
  and that the assignment~\(W\mapsto [g]\in Q=N_{2}/\tN_{2}\) is well-defined and bijective.

  This is certainly true if~\(\Sigma=\sigma\) is a single cone, say of dimension~\(m\),
  since \(\XXX_{\Sigma}\cong\R^{m}\times(\Ri)^{n-m}\) and \(N_{2}/\tN_{2}\cong(\Z_{2})^{n-m}\) in this case.
  Note that if \(\tau\subset\sigma\) is a face and \(W\subset\XXX_{\sigma}\) a component,
  then all points in~\(W\cap\XXX_{\sigma}\cap L\) have the same image in~\(Q\).

  We now turn to the general case.
  Let \(\sigma\),~\(\sigma'\in\Sigma\), and
  let \(W\) and~\(W'\) be components of~\(\XXX_{\sigma}\) and~\(\XXX_{\sigma'}\), respectively.
  Assume that \(W\) and~\(W'\) intersect and let \(\tau=\sigma\cap\sigma'\).
  Then \(W\cap W'\) is a union of components of~\(\XXX_{\tau}\).
  By what we have said above, all points in~\(W\cap W'\cap L\)
  have the same image in~\(Q\), hence the same is true for all points in~\(W\cap L\) and~\(W'\cap L\).
  This proves that the map from the components of~\(\XXX_{\Sigma}\) to~\(Q\) is well-defined.
  It is also surjective by construction.

  Let~\(g\in L\), and let \(\bar y\in\tN_{2}\subset L\) be the image of the primitive generator~\(y\in\tN\) of some ray~\(\rho\) in~\(\Sigma\).
  Then \(g\) and~\(g+\bar y\) lie in the same component of~\(\XXX_{\rho}\), hence also in the same component~\(W\) of~\(\XXX_{\Sigma}\).
  By repeating this argument, we see that the map~\(W\mapsto[g]\) is injective.
\end{proof}

\begin{remark}
  \label{rem:splitting}
  Let \(N'=N\cap\tN_{\R}\) be the smallest reduced sublattice of~\(N\) containing all rays of~\(\Sigma\),
  and let \(\Sigma'\) be the fan~\(\Sigma\), considered as lying in~\(N'\).
  Writing \(\YYY_{\Sigma}=\XXX_{\Sigma'}\) and \(\LLL^{\Sigma}=\TTT_{N/N'}\), we have
  a canonical inclusion~\(\YYY_{\Sigma}\hookrightarrow\XXX_{\Sigma}\)
  and a non-canonical isomorphism~\(\XXX_{\Sigma} \cong \YYY_{\Sigma}\times \LLL^{\Sigma}\).
\end{remark}

Fix an ordering of the, say, \(m\)~rays in~\(\Sigma\), and define a linear map~\(\hN=\Z^{m}\to N\)
be sending each basis vector to the minimal representative of the corresponding ray in~\(\Sigma\).
Let \(\hSigma\) be the subfan of the positive orthant in~\(\hN_{\R}\)
that is combinatorially equivalent to~\(\Sigma\) under this map.
The associated real toric variety~\(\ZZZ_{\Sigma}=\XXX_{\hSigma}\) is the ``real Cox construction'';
it is the complement of the real coordinate subspace arrangement defined by~\(\Sigma\), considered as a simplicial complex.
(See~\cite[Thm.~5.4.5]{BuchstaberPanov:2015} or~\cite[Sec.~4]{Franz:torprod} for the analogous construction for toric varieties.)
By construction, the Cox construction
comes with a map of fans \((\hSigma,\hN)\to(\Sigma,N)\),
hence with a morphism of real toric varieties~\(\ZZZ_{\Sigma}\to\XXX_{\Sigma}\).
Let \(\KKK\) be the kernel of the associated map of real algebraic tori~\(\GGG=\TTT_{\hN}\to\LLL=\TTT_{N}\).

The following example illustrates that the real Cox construction
behaves differently from the complex case in that it may fail to surject onto~\(\XXX_{\Sigma}\)
even if the rays in~\(\Sigma\) span the vector space \(N_{\R}\).

\begin{example}
  Consider the fan in~\(\Z^{2}\) with rays spanned by the vectors~\(y_{1}=[1,0]\) and~\(y_{2}=[-1,2]\)
  (and no \(2\)-dimensional cones).
  As discussed in~\cite[Sec.~9]{Franz:torprod}, the corresponding toric variety
  is \(\XXX_{\Sigma}(\C)=(\C^{2}\setminus\{0\})/\pm1\simeq S^{3}/\pm1=\RP^{3}\).
  This description in fact is the quotient of the Cox construction.

  The real Cox construction therefore is \(\R^{2}\setminus\{0\}\), which consists
  of two copies of~\(\R\times\Ri\simeq D^{1}\times S^{0}\) that are glued together
  such that the compact retracts form the edges of a square.
  The real toric variety~\(\XXX_{\Sigma}=\XXX_{\Sigma}(\R)\) itself is also covered
  by two copies of~\(\R\times\Ri\). Because \(y_{1}\) and~\(y_{2}\) agree modulo~\(2\),
  the gluing map between the two copies of~\(\Ri\times\Ri\) is the identity, so that
  the compact retracts form two circles.
  The map~\(\ZZZ_{\Sigma}\to\XXX_{\Sigma}\) is again the quotient by~\(\KKK=\Z_{2}\),
  but this time it is not surjective.
\end{example}

The relation between a real toric variety and its Cox construction is as follows.
(Our formulation is in fact equally true in the complex setting.)

\begin{proposition}
  \label{thm:cox-real}
  As a topological space, \(\ZZZ_{\Sigma}\) is a principal \(\KKK\)-bundle, and
  there is an \(\LLL\)-equivariant homeomorphism
  \begin{equation*}
    \XXX_{\Sigma} \approx \LLL\timesunder{\GGG/\KKK}\ZZZ_{\Sigma}/\KKK.
  \end{equation*}
\end{proposition}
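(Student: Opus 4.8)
The plan is to unwind the definitions of the Cox construction and reduce everything to a statement about toric varieties over a single cone, then glue. First I would fix the map of fans $(\hSigma,\hN)\to(\Sigma,N)$ and the induced map of tori $\GGG=\TTT_{\hN}\to\LLL=\TTT_{N}$, with kernel $\KKK$. The key geometric input is that, cone by cone, the Cox construction restricted over a cone $\hsigma\in\hSigma$ is an affine real toric variety of the form $\R^{|\sigma|}\times\GGG^{[m]\setminus\sigma}$ on which $\KKK$ acts freely (this freeness is exactly the regularity hypothesis on $\Sigma$, which guarantees that the sublattice generated by the rays of $\sigma$ is a direct summand, so the relevant quotient of $2$-tori is split). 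Hence $\ZZZ_{\sigma}\to\XXX_{\sigma}$ is a principal $\KKK$-bundle over each affine chart; the charts are compatible, so $\ZZZ_{\Sigma}\to\ZZZ_{\Sigma}/\KKK$ is a principal $\KKK$-bundle globally, and $\ZZZ_{\Sigma}/\KKK$ is $\LLL$-equivariantly an open subvariety of $\XXX_{\Sigma}$.

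Next I would identify $\ZZZ_{\Sigma}/\KKK$ inside $\XXX_{\Sigma}$. By \Cref{thm:components}, the components of $\XXX_{\Sigma}$ are indexed by $N_{2}/\tN_{2}$, and by the splitting in \Cref{rem:splitting} we may factor off $\LLL^{\Sigma}=\TTT_{N/N'}$; after this reduction the rays span $N'_{\R}$, so the image of $\ZZZ_{\Sigma}\to\XXX_{\Sigma}$ is precisely the union of those components whose class in $N'_{2}/\tN_{2}$ lies in the image of $\GGG$, i.e.\ the components containing the identity and its $\GGG$-translates. The remaining components are obtained by translating by $\LLL$: since $\LLL$ acts transitively on $N_{2}/\tN_{2}$ by \Cref{thm:components} and the stabilizer of the image component is exactly $\GGG/\KKK$ (acting through $\LLL$), the whole of $\XXX_{\Sigma}$ is swept out, giving the $\LLL$-equivariant surjection
\begin{equation*}
  \LLL\timesunder{\GGG/\KKK}\ZZZ_{\Sigma}/\KKK \to \XXX_{\Sigma}.
\end{equation*}

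To see this is a homeomorphism, I would check it is a bijection and a local homeomorphism. Injectivity follows because two points of $\ZZZ_{\Sigma}/\KKK$ in the same $\LLL$-orbit of $\XXX_{\Sigma}$ differ by an element of $\GGG/\KKK$, by the cone-by-cone analysis of orbits (the orbit–cone correspondence for real toric varieties, \cite[Sec.~2]{Franz:2010}); surjectivity is the sweeping-out argument above. For the local homeomorphism property it suffices to work over the affine charts $\XXX_{\sigma}$, where everything is an explicit quotient of $\R^{|\sigma|}\times\GGG^{[m]\setminus\sigma}$ by a free linear action of $\KKK$ followed by the balanced product with $\LLL$, and the map is manifestly a homeomorphism onto its image. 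Patching these charts, which are $\LLL$-stable and cover $\XXX_{\Sigma}$, yields the global homeomorphism.

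The main obstacle I expect is the bookkeeping around non-surjectivity of $\ZZZ_{\Sigma}\to\XXX_{\Sigma}$ and the precise identification of which components of $\XXX_{\Sigma}$ are hit: one has to track $2$-torsion carefully, distinguishing $\tN$ (spanned by the ray generators) from $N'=N\cap\tN_{\R}$ and from $N$ itself, and verify that the balanced product with $\LLL$ exactly repairs the deficiency. Once the component combinatorics of \Cref{thm:components} and \Cref{rem:splitting} are in hand, the rest is the standard affine-chart verification, formally identical to the complex Cox construction \cite[Thm.~5.4.5]{BuchstaberPanov:2015}, \cite[Sec.~4]{Franz:torprod}.
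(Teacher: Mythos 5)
Your overall strategy (reduce by \Cref{rem:splitting}, argue chart by chart, use \Cref{thm:components} for the component bookkeeping) is close in spirit to the paper's, and the orbit-by-orbit injectivity argument is a viable substitute for the paper's detour through \(\tXXX_{\Sigma}=\ZZZ_{\Sigma}/\tKKK\): for a regular cone \(\sigma\) the kernel of the orbit map \(\TTT_{\hN/\Z^{\hsigma}}\to\TTT_{N/N_{\sigma}}\) is exactly the image of \(\KKK\), so fibres of \(\ZZZ_{\Sigma}\to\XXX_{\Sigma}\) are single \(\KKK\)-orbits. But two steps are asserted rather than proved, and one intermediate claim is false. The map \(\ZZZ_{\sigma}\to\XXX_{\sigma}\) is \emph{not} a principal \(\KKK\)-bundle over the affine chart: as the paper's example with rays \([1,0]\), \([-1,2]\) shows, it need not be surjective even on a single chart, because the ray generators need not span \(N_{2}=N\otimes\Z_{2}\). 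So "principal bundle over each chart" must mean "over its image", and even then freeness of the \(\KKK\)-action does not by itself give local triviality; some argument (explicit slices on the charts \(\R^{\hsigma}\times(\Ri)^{[m]\setminus\hsigma}\), properness, or the citation of Uma's theorem in the case \(\tN=N\), which is how the paper gets the bundle statement) is needed.

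The genuine gap is the identification of the image of \(\ZZZ_{\Sigma}/\KKK\to\XXX_{\Sigma}\). You write that the image is "precisely the union of those components whose class lies in the image of \(\GGG\)", i.e.\ (after the reduction) the whole identity component \(W\), but no argument is given. Injectivity plus invariance of domain only yields that the image is \emph{open}; nothing in your sketch excludes a proper open subset of \(W\), and this is exactly where the content of the proposition sits — it is also what makes the balanced product work, since you need the \(\LLL\)-translates of the image to be pairwise disjoint or equal and to cover \(\XXX_{\Sigma}\). The paper closes this by a counting argument: for each \(\sigma\in\Sigma\) the kernel \(Q\cong N_{2}/\tN_{2}\) of \(\tLLL^{\sigma}\to\LLL^{\sigma}\) permutes the components of \(\tXXX_{\sigma}\) freely with orbits equal to the nonempty fibres, so the image of \(\tXXX_{\sigma}\) consists of \(r/q\) components of \(\XXX_{\sigma}\), which by \Cref{thm:components} is precisely the number of components of \(\XXX_{\sigma}\) contained in \(W\); hence the image fills \(W\) chart by chart. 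You must supply this (or an argument that the image is also closed, e.g.\ via properness); as it stands the proposal assumes the crucial step it set out to prove.
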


\begin{proof}
  Using \Cref{rem:splitting}, we can reduce the claim
  to the case where the rays of~\(\Sigma\) span~\(N_{\R}\), so that \(N\) and~\(\tN\) have the same rank.  
  Moreover, as mentioned earlier, the case~\(\tN=N\) of our claim appears in~\cite[Lemma~7.3, Rem.~7.4]{Uma:2004};
  we have \(\KKK\cong(\Ri)^{n-m}\) and \(\LLL=\GGG/\KKK\) in this case.
  It therefore suffices to consider the map
  \begin{equation}
    \pi\colon \tXXX_{\Sigma} \to \XXX_{\Sigma}
  \end{equation}
  where \(\tXXX_{\Sigma}=\ZZZ_{\Sigma}/\tKKK\) is the real toric variety associated to~\(\Sigma\),
  considered as a fan in~\(\tN\).
  Here \(\tKKK\) is the kernel of the quotient map~\(\GGG\to\tLLL=\TTT_{\tN}\).

  Both the kernel and cokernel of the map~\(\tLLL\to\LLL\) are isomorphic to~\(Q=N_{2}/\tN_{2}\),
  and its image is~\(\bGGG\coloneqq\LLL/Q=\GGG/\KKK\). Let \(q\) be the size of~\(Q\).
  Let us consider the restriction
  \begin{equation}
    \begin{tikzcd}
      \tYYY_{\!\!\sigma} \arrow[equal]{d} \arrow[hook]{r} & \tXXX_{\sigma} \arrow{d} \arrow[equal]{r} & \tYYY_{\!\!\sigma} \times \tLLL^{\sigma}  \arrow{d} \\
      \YYY_{\sigma} \arrow[hook]{r} & \XXX_{\sigma} \arrow[equal]{r} & \YYY_{\sigma} \times \LLL^{\sigma}
    \end{tikzcd}
  \end{equation}
  of~\(\pi\) for~\(\sigma\in\Sigma\). It is equivariant with respect to the map~\(\tLLL^{\sigma}\to\LLL^{\sigma}\),
  which again has kernel~\(Q\) since the regular cone~\(\sigma\) spans the same sublattice in~\(N\) and~\(\tN\).
  This implies that \(Q\) freely permutes the connected components of~\(\tXXX_{\sigma}\),
  and that its orbits are the non-empty fibres of the map~\(\pi\).

  \Cref{thm:components} tells us that \(\XXX_{\Sigma}\) has \(q\)~components,
  while \(\tXXX_{\Sigma}\) is connected, so that its image lies in the component~\(W\subset\XXX_{\Sigma}\) containing \(1\in L\).
  Fix a~\(\sigma\in\Sigma\), and let \(r\) be the number of components of~\(\XXX_{\sigma}\),
  which is equal to the corresponding number for~\(\tXXX_{\sigma}\).
  By equivariance, each component of~\(\XXX_{\Sigma}\) contains the same number~\(r/q\) of components of~\(\XXX_{\sigma}\).
  On the other hand, the image of the map~\(\pi_{\sigma}\) also has \(r/q\) components.
  This implies that \(W\) is the image of~\(\pi\) and therefore the quotient of~\(\tXXX_{\Sigma}\) by~\(Q\).
  
  As a result, we see that \(\ZZZ_{\Sigma}\) is a principal \(\KKK\)-bundle with base~\(W\).
  The projection map~\(\ZZZ_{\Sigma}\to W\) is equivariant with respect to the quotient map~\(\GGG\to\bGGG\).
  Since each of the \(q\)~components of~\(\XXX_{\Sigma}\) is of the form~\(g\cdot W\) for some~\(g\in L\),
  the surjective map~\(\LLL\times W\to\XXX_{\Sigma}\) induces the desired \(\LLL\)-equivariant homeomorphism.
\end{proof}

\begin{remark}
  \label{rem:natural}
  Let \(\sigma\in\Sigma\) with corresponding cone~\(\hsigma\in\hSigma\).
  The toric morphism \(\ZZZ_{\Sigma}\to\XXX_{\Sigma}\)
  sends the \(\GGG\)-orbit of~\(\ZZZ_{\Sigma}\) indexed by~\(\hsigma\)
  to the \(\LLL\)-orbit of~\(\XXX_{\Sigma}\) indexed by~\(\sigma\).
  The homeomorphism from \Cref{thm:cox-real} therefore is natural in the following sense:
  If we form the fan~\(\hSigma\) in~\(\hN\) for a given fan~\(\Sigma\),
  we can use subfans of it in the same lattice~\(\hN\)
  to construct the Cox constructions~\(\ZZZ_{\Sigma'}\) for all subfans~\(\Sigma'\subset\Sigma\).
  By what we have just said, the homeomorphism~\(\XXX_{\Sigma} \approx \LLL\timesunder{\GGG/\KKK}\ZZZ_{\Sigma}/\KKK\)
  then restricts to a homeomorphism~\(\XXX_{\Sigma'} \approx \LLL\timesunder{\GGG/\KKK}\ZZZ_{\Sigma'}/\KKK\).
\end{remark}

\begin{corollary}
  \label{thm:rts-toric-retraction}
  We keep the notation and set \(K=\KKK\cap G\).
  Then there is an \(L\)-equivariant strong deformation retract
  \begin{equation*}
    L \timesunder{G/K} Z_{\Sigma}/K \hookrightarrow \XXX_{\Sigma},
  \end{equation*}
  natural in~\(\Sigma\).
\end{corollary}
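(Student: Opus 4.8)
The plan is to deduce the statement from \Cref{thm:cox-real} combined with the relation between the real moment-angle complex $Z_{\Sigma}$ and the complement of the real coordinate-subspace arrangement $\ZZZ_{\Sigma}=\ZK_{\Sigma}(\R,\Ri)$. Write $\bGGG=\GGG/\KKK$ for the image of $\GGG$ in $\LLL$. By \Cref{thm:cox-real}, $\XXX_{\Sigma}\approx\LLL\timesunder{\bGGG}(\ZZZ_{\Sigma}/\KKK)$; since $L=N_{2}$ surjects onto the finite quotient $\LLL/\bGGG\cong N_{2}/\tN_{2}$ (\Cref{thm:components}), every $\bGGG$-coset in $\LLL$ meets $L$, so this balanced product may be rewritten as $\XXX_{\Sigma}\approx L\timesunder{G/K}(\ZZZ_{\Sigma}/\KKK)$, where $G/K$ is identified with the subgroup $L\cap\bGGG$ of $\bGGG$ via $G\hookrightarrow\GGG\to\bGGG$ (which has image $L\cap\bGGG$ and kernel $K$). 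Under this identification $L\timesunder{G/K}(Z_{\Sigma}/K)$ is exactly the subspace in the statement; note $K$ acts freely on $Z_{\Sigma}$, being a subgroup of the group $\KKK$ acting freely on $\ZZZ_{\Sigma}$. So it remains to show that $Z_{\Sigma}/K\hookrightarrow\ZZZ_{\Sigma}/\KKK$, induced by $Z_{\Sigma}\hookrightarrow\ZZZ_{\Sigma}$, is a $(G/K)$-equivariant strong deformation retract; applying $L\timesunder{G/K}(-)$ then finishes the proof.

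First I would record that $\KKK$ splits as $\KKK=K\times R$ with $K=\KKK\cap G$ a finite $2$-group and $R=\KKK^{\circ}$ a vector subgroup on which $G$ acts trivially (being a subgroup of the abelian group $\GGG$): choosing a splitting $\hN=\hN_{0}\oplus\tN$ of $\hN\twoheadrightarrow\tN$ one gets $\KKK=(\hN_{0}\otimes\Ri)\oplus\Tor_{1}(N/\tN,\Ri)$ inside $\GGG$, where $\hN_{0}\otimes\Ri=(\hN_{0}\otimes\Z_{2})\times(\hN_{0}\otimes\R_{>0})$ and $\Tor_{1}(N/\tN,\Ri)$ is a finite $2$-group; collecting the torsion yields $K$ and $R=\hN_{0}\otimes\R_{>0}\cong(\R_{>0})^{m-\operatorname{rk}\tN}$. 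Consequently $\ZZZ_{\Sigma}$, a principal $\KKK$-bundle by \Cref{thm:cox-real}, is a principal $R$-bundle over $\ZZZ_{\Sigma}/R$, and since $R$ is $G$-equivariantly contractible this bundle is $G$-equivariantly trivial; in particular the projection $\ZZZ_{\Sigma}\to\ZZZ_{\Sigma}/R$ is a $G$-homotopy equivalence.

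The core step is that $Z_{\Sigma}\hookrightarrow\ZZZ_{\Sigma}\to\ZZZ_{\Sigma}/R$ is a $G$-equivariant strong deformation retract. By the real, $G$-equivariant analogue of the standard deformation retraction of the complement of a coordinate-subspace arrangement onto the moment-angle complex (\cf\ the complex case in \cite[Sec.~4]{Franz:torprod} and \cite[Ch.~4]{BuchstaberPanov:2015}), $Z_{\Sigma}\hookrightarrow\ZZZ_{\Sigma}$ is a $G$-equivariant strong deformation retract, so the composite $Z_{\Sigma}\to\ZZZ_{\Sigma}/R$ is a $G$-homotopy equivalence. It is moreover a closed $G$-embedding: $Z_{\Sigma}$ is compact, so only injectivity is at issue, i.e.\ that $z,r\cdot z\in Z_{\Sigma}$ with $r\in R$ forces $r=1$. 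Writing $v=\log r\in\ker(\hN_{\R}\to N_{\R})$, so that $\sum_{i}v_{i}a_{i}=0$ for the primitive ray generators $a_{i}$, and putting $\tau=\{i:|z_{i}|<1\}$, $\tau'=\{i:|(r\cdot z)_{i}|<1\}$ (both cones of $\Sigma$), one checks that $v_{i}>0$ forces $|z_{i}|\le 1/r_{i}<1$, hence $i\in\tau$, and $v_{i}<0$ forces $|(r\cdot z)_{i}|=r_{i}|z_{i}|\le r_{i}<1$, hence $i\in\tau'$. Then the common value $b=\sum_{v_{i}>0}v_{i}a_{i}=\sum_{v_{i}<0}|v_{i}|a_{i}$ lies in the relative interiors of the faces $\operatorname{cone}\{a_{i}:v_{i}>0\}$ of $\operatorname{cone}\tau$ and $\operatorname{cone}\{a_{i}:v_{i}<0\}$ of $\operatorname{cone}\tau'$, and also in $\operatorname{cone}\tau\cap\operatorname{cone}\tau'$, a common face of these simplicial cones; since the generators of a cone of $\Sigma$ are linearly independent, both $\{v_{i}>0\}$ and $\{v_{i}<0\}$ lie in a single face, and comparing the coordinates of $b$ in the linearly independent generators of that face forces these two index sets to coincide with matching yet oppositely signed entries — hence to be empty, so $v=0$. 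Thus $Z_{\Sigma}\hookrightarrow\ZZZ_{\Sigma}/R$ is a closed embedding of a finite $G$-CW complex into a $G$-manifold, hence a $G$-cofibration; being also a $G$-homotopy equivalence it is a $G$-strong deformation retract. Since $K$ acts freely on $Z_{\Sigma}$ and on $\ZZZ_{\Sigma}/R$, this ($K$-equivariant) retraction descends to the desired $(G/K)$-equivariant strong deformation retraction of $\ZZZ_{\Sigma}/\KKK=(\ZZZ_{\Sigma}/R)/K$ onto $Z_{\Sigma}/K$.

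Naturality in $\Sigma$ is built in: for a subcomplex $\Sigma'\subset\Sigma$ one uses the same lattice $\hN$ (\Cref{rem:natural}), so that $\GGG$, $\LLL$, $\KKK$, $R$, $K$ are unchanged and all the inclusions $Z_{\Sigma'}\hookrightarrow Z_{\Sigma}$, $\ZZZ_{\Sigma'}\hookrightarrow\ZZZ_{\Sigma}$, $\XXX_{\Sigma'}\hookrightarrow\XXX_{\Sigma}$ are compatible — only the inclusion is asserted to be natural, not the homotopies, which depend on $\Sigma$ through the moment-angle retraction. I expect the main obstacle to be the injectivity claim in the core step, which is the only place where the fan, rather than just the underlying simplicial complex, enters, together with setting up the real equivariant version of the moment-angle deformation retraction (routine, but needing to be stated carefully).
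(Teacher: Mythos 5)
Your proposal is correct and follows essentially the same route as the paper: the equivariant retraction \(Z_{\Sigma}\hookrightarrow\ZZZ_{\Sigma}\), its descent to a \(G/K\)-equivariant strong deformation retract \(Z_{\Sigma}/K\hookrightarrow\ZZZ_{\Sigma}/\KKK\), then the balanced-product identification with \(\XXX_{\Sigma}\) via \Cref{thm:cox-real} and naturality via \Cref{rem:natural}. The only difference is that where the paper simply cites \cite[Thm.~2.1]{Franz:2010} for the moment-angle retraction and an argument analogous to \cite[Prop.~4.1]{Franz:torprod} for the descent step, you prove the latter directly (splitting \(\KKK=K\times R\), the fan-theoretic injectivity argument, and the cofibration-plus-homotopy-equivalence conclusion), and these details check out.
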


\begin{proof}
  We write \(\bG=G/K\).
  As mentioned in the introduction already, the inclusion \(Z_{\Sigma} \to \ZZZ_{\Sigma}\)
  is a \(G\)-equivariant strong deformation retract, see~\cite[Thm.~2.1]{Franz:2010}.
  By an argument analogous to~\cite[Prop.~4.1]{Franz:torprod} this implies
  that the map~\(Z_{\Sigma}/K \to \ZZZ_{\Sigma}/\KKK\)
  is a \(\bG\)-equivariant strong deformation retract. Hence
  \begin{equation}
    L \timesunder{\bG} Z_{\Sigma}/K \hookrightarrow L \timesunder{\bG} \ZZZ_{\Sigma}/\KKK
    = \LLL \timesunder{\GGG/\KKK} \ZZZ_{\Sigma}/\KKK \approx \XXX_{\Sigma},
  \end{equation}
  is an \(L\)-equivariant strong deformation retract.
  In the last step we have used \Cref{thm:cox-real} and before that
  the fact that \(\GGG/\KKK\) contains the connected component of the identity in~\(\LLL\).
  Naturality holds as explained in \Cref{rem:natural}.
\end{proof}

The \(L\)-dga~\(A(\Sigma,L)\) from the introduction is still well-defined for the real toric variety~\(\XXX_{\Sigma}\)
instead of a real toric space. We keep the ordering of the rays in~\(\Sigma\) chosen for the Cox construction.
The coefficient~\(x_{ij}\) of the characteristic matrix then is the \(i\)-th coordinate (modulo~\(2\)) of the primitive generator~\(y_{j}\) of the \(j\)-th ray.

\begin{theorem}
  \label{thm:model-toric}
  The \(L\)-dga~\(A(\Sigma,L)\) is naturally quasi-isomorphic to~\(C^{*}(\XXX_{\Sigma})\).
\end{theorem}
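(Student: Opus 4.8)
The plan is to reduce \Cref{thm:model-toric} to the already-proven \Cref{thm:model-rts} by exploiting the geometric comparison between a real toric variety and the associated real toric space. By \Cref{thm:rts-toric-retraction} there is an \(L\)-equivariant strong deformation retract \(L \timesunder{G/K} Z_{\Sigma}/K \hookrightarrow \XXX_{\Sigma}\), natural in~\(\Sigma\), so it suffices to produce a natural quasi-isomorphism between \(A(\Sigma,L)\) and the cochain algebra of the retract \(L \timesunder{\bG} X_{\Sigma}'\), where I write \(X_{\Sigma}' = Z_{\Sigma}/K\) and \(\bG = G/K\). Since a strong deformation retract induces an \(L\)-equivariant homotopy equivalence, it induces a quasi-isomorphism \(C^{*}(\XXX_{\Sigma}) \to C^{*}(L \timesunder{\bG} X_{\Sigma}')\) of \(L\)-dgas, and naturality in~\(\Sigma\) is inherited. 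So the real content is to understand the cohomology of the induced space \(L \timesunder{\bG} X_{\Sigma}'\).

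The key observation is that \(L \timesunder{\bG} X_{\Sigma}'\) is just a disjoint union of copies of the real toric space \(X_{\Sigma}' = Z_{\Sigma}/K\), indexed by the cosets \(L/\bG\); more precisely, since \(\bG\) acts on \(X_{\Sigma}'\) and \(L\) contains \(\bG\) with quotient the finite group \(Q = L/\bG \cong N_{2}/\tN_{2}\), the balanced product \(L \timesunder{\bG} X_{\Sigma}'\) is homeomorphic to \(Q \times X_{\Sigma}'\), with \(L\) acting by translating the \(Q\)-factor via \(\bG\) and acting on \(X_{\Sigma}'\) through \(\bG\). Therefore its cochain algebra is \(L\)-equivariantly quasi-isomorphic to \(\FU{Q} \otimes C^{*}(X_{\Sigma}')\). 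Now I use \Cref{thm:model-rts}, which gives a natural \(\bG\)-equivariant (and hence \(L\)-equivariant, via restriction along \(L \to \bG\)) quasi-isomorphism \(A(\Sigma,\bG) \to C^{*}(X_{\Sigma}')\). The plan is then to verify that \(\FU{Q} \otimes A(\Sigma,\bG)\), with the diagonal-type \(L\)-structure, is naturally isomorphic as an \(L\)-dga to \(A(\Sigma,L)\) with the characteristic matrix \((x_{ij})\) determined by the ray generators \(y_{j}\).

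The main obstacle — and the step I would devote the most care to — is this last identification of \(L\)-dgas. The point is that the splitting \(L \cong \bG \times Q\) (equivalently \(N_{2} \cong \bG \times Q\)) need not be compatible with the coordinate decomposition \(L \cong (\Z_{2})^{n}\) that enters the definition of \(A(\Sigma,L)\); one must check that the algebra \(\FU{L} \cong \FU{\bG} \otimes \FU{Q}\) together with the commutation relations \eqref{eq:rel-si-tj} and the differential \eqref{eq:intro:diff-si-tj} is insensitive to this choice. Concretely: the commutation relation \(f\,t_{j} = t_{j}\,f\cdot\posub{j}\) from \eqref{eq:rel-f-tj-poj} only depends on how the elements \(\posub{j} \in L\) — the images of the canonical generators of \(G = C^{m}\), i.e.\ the reductions modulo~\(2\) of the ray generators \(y_{j}\) — act on \(\FU{L}\), and this action is entirely determined by \(\Sigma\), the lattice \(N\), and the surjection \(G \to L\), independently of any chosen basis of \(L\). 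Since the \(\posub{j}\) lie in \(\bG \subset L\), they act trivially on the \(\FU{Q}\) factor, and the differential \(d\,f = \sum_{j}(f\,t_{j} - t_{j}\,f)\) likewise only sees the \(\posub{j}\)-action; so \(\FU{Q} \otimes A(\Sigma,\bG)\) is precisely \(A(\Sigma,L)\) once the characteristic matrix for the latter is read off from the \(y_{j}\), which is exactly the convention fixed just before the statement of \Cref{thm:model-toric}. Assembling these pieces — the deformation retract, the product decomposition of the balanced product, \Cref{thm:model-rts}, and the basis-independence of \(A(\Sigma,-)\) — completes the proof, with naturality in~\(\Sigma\) holding at each stage because every ingredient is natural.
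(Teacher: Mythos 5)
Your argument is correct, and it reaches the conclusion by a somewhat different route than the paper. Both proofs take \Cref{thm:rts-toric-retraction} as the geometric input that replaces \(\XXX_{\Sigma}\) by \(L\timesunder{\bG}Z_{\Sigma}/K\) (with \(\bG=G/K\), and with \(K\) acting freely on \(Z_{\Sigma}\) because the fan is regular). From there the paper does \emph{not} split off the component group: it passes again through the Borel construction, identifies \((\XXX_{\Sigma})_{L}\simeq E\bG\timesunder{\bG}Z_{\Sigma}/K\simeq \DJ_{\Sigma}\) over \(BL\) via \Cref{thm:homotopy-DJ}, and then reruns the second and third steps of \Cref{sec:dga-model} with the (now possibly non-surjective) characteristic map \(G\to L\) absorbed into \(M(\Sigma,L)\) and \(Y_{\Sigma,L}=\DJ_{\Sigma}\timesover{BL}EL\). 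You instead use \Cref{thm:model-rts} as a black box for \(X'=Z_{\Sigma}/K\) with group \(\bG\), observe that \(L\timesunder{\bG}X'\cong Q\times X'\) for a chosen splitting \(L\cong\bG\times Q\), and check algebraically that \(\FU{Q}\otimes A(\Sigma,\bG)\cong A(\Sigma,L)\), using that the \(\posub{j}\) lie in \(\bG\) and that the relations and differential of \(A(\Sigma,L)\) depend only on the translation action of the \(\posub{j}\) on \(\FU{L}\), not on the coordinates \(s_{i}\). What your route buys is that you never need to extend the Step 2 machinery beyond surjective \(G\to L\) (the paper's ``complete the proof as from Step 2 on'' tacitly requires \(M(\Sigma,L)\) and \(Y_{\Sigma,L}\) to be interpreted for a non-surjective characteristic map, e.g.\ via induction from \(\bG\) to \(L\)); the price is the non-canonical splitting, whose harmlessness you rightly address, and you should make explicit that it is chosen once for the ambient fan, so that for subcomplex inclusions \(\Sigma'\subset\Sigma\) the groups \(K\), \(\bG\), \(Q\) and the splitting are fixed and naturality holds in the sense of \Cref{rem:natural}. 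Two cosmetic points: the comparison in \Cref{thm:model-rts} is a zigzag rather than a single map \(A(\Sigma,\bG)\to C^{*}(X')\), but tensoring the zigzag with the \(\kk\)-free algebra \(\FU{Q}\) preserves quasi-isomorphisms, so your argument is unaffected; and the identification \(C^{*}(Q\times X')\cong\FU{Q}\otimes C^{*}(X')\) uses that \(Q\) is finite, which is the case here by \Cref{thm:components}.
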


\begin{proof}
  As in \Cref{sec:dga-model}, we replace \(\XXX_{\Sigma}\) by the homotopy-equivalent space
  \begin{equation}
    Y_{\Sigma} = (\XXX_{\Sigma})_{L} \timesover{BL} EL.
  \end{equation}
  Writing \(\bG=G/K\) as in the previous proof, we have
  \begin{equation}
    (\XXX_{\Sigma})_{L} = EL \timesunder{L} \XXX_{\Sigma} \simeq E\bG \timesunder{\bG} Z_{\Sigma}/K \simeq \DJ_{\Sigma}
  \end{equation}
  by \Cref{thm:rts-toric-retraction} and \Cref{thm:homotopy-DJ},
  and we can complete the proof as from \Cref{sec:step-2} on.
\end{proof}

\begin{corollary}
  \label{thm:maximal}
  Smooth toric varieties are maximal.
\end{corollary}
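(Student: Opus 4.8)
The plan is to deduce maximality of a smooth toric variety $\XXX=\XXX_\Sigma(\C)$ (with its standard conjugation, whose real locus is $\XXX_\Sigma$) from the dga model of \Cref{thm:model-toric} together with the known description of $H^*(\XXX_\Sigma(\C))$. Recall that a complex variety with involution is maximal precisely when the total $\Z_2$-Betti number of its real locus equals that of the complex variety; by Smith theory the former is always $\le$ the latter, so it suffices to prove the inequality $\dim_{\Z_2}H^*(\XXX_\Sigma;\Z_2)\ge\dim_{\Z_2}H^*(\XXX_\Sigma(\C);\Z_2)$. First I would reduce to the case $\tN=N$, i.e.\ where the rays of $\Sigma$ span $N_\R$: by \Cref{rem:splitting} we have $\XXX_\Sigma\cong\YYY_\Sigma\times\LLL^\Sigma$ with $\LLL^\Sigma\cong(\Ri)^{n-n'}$, and likewise over $\C$ one splits off a factor $(\C^\times)^{n-n'}$; since both factors contribute the Betti numbers of a torus of the same rank, and maximality is multiplicative under products, it is enough to treat $\YYY_\Sigma$, i.e.\ to assume $N=\tN$.

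Next I would compute the two Betti sums. Over $\C$, for a smooth toric variety the cohomology $H^*(\XXX_\Sigma(\C);\Z_2)$ is well-known (e.g.\ from the author's \cite{Franz:torprod}, or classically for the complete case) and is additively $\Tor_{\R}(\Z_2,\Z_2[\Sigma])$ where $\R=H^*(BT_L;\Z_2)$ acts via the analogue of $Bp^*$ — but crucially, doubled in degree: the complex $t_j$ sit in degree $2$. Over $\R$, \Cref{thm:iso-Tor} gives $H^*(\XXX_\Sigma;\Z_2)=\Tor_R(\Z_2,\Z_2[\Sigma])$ with the \emph{same} ring $R=\Z_2[\nn t_1,\dots,\nn t_n]$ (now in degree $1$) acting on the \emph{same} Stanley--Reisner ring $\Z_2[\Sigma]$ by the \emph{same} matrix $(x_{ij})$. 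Thus the two $\Tor$-modules are literally isomorphic as bigraded vector spaces up to the degree-halving $2k\mapsto k$; in particular their total dimensions over $\Z_2$ coincide. This yields the equality of Betti sums, hence maximality. (When $\tN\ne N$ one should double-check that the model $A(\Sigma,L)$ used in \Cref{thm:model-toric} for the real locus still computes the right thing — but that is exactly the content of that theorem, so no extra work is needed there; the $\YYY_\Sigma$-reduction is only a convenience to line up with the $\tN=N$ case of the $\C$-side computation.)

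The main obstacle is making the comparison between the complex and real cohomology precise enough to conclude equality of \emph{total} Betti numbers, not merely of each graded piece. The cleanest route is to observe that \Cref{thm:iso-Tor} (real case) and its complex analogue express both cohomologies as the homology of $\KK\otimes_R\Z_2[\Sigma]$, where in the real case $\KK$ is the Koszul resolution of $\Z_2$ over $R$ with generators $s_i$ in degree $1$ and in the complex case the Koszul resolution with $s_i$ in degree $1$ (cohomological) but internal weight tracking the degree-$2$ classes $t_j$; the two chain complexes are identified by an isomorphism that multiplies all degrees by a fixed bookkeeping factor and hence is a bijection on a homogeneous basis. Since $\Z_2[\Sigma]$ is finite-dimensional in each degree and (under $\tN=N$, $\Sigma$ complete) the relevant $\Tor$ is finite-dimensional, the induced isomorphism of bigraded vector spaces gives $\dim_{\Z_2}H^*(\XXX_\Sigma;\Z_2)=\dim_{\Z_2}H^*(\XXX_\Sigma(\C);\Z_2)$, which is exactly the maximality condition via \cite[Prop.~7.3.7]{Hausmann:2014}.
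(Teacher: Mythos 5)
Your argument is correct and follows essentially the same route as the paper: combine \Cref{thm:model-toric} with \Cref{thm:iso-Tor} to express \(H^{*}(\XXX_{\Sigma};\Z_{2})\) as \(\Tor_{R}(\Z_{2},\Z_{2}[\Sigma])\), invoke the analogous (evenly graded) torsion-product description of \(H^{*}(\XXX_{\Sigma}(\C);\Z_{2})\) for smooth complex toric varieties, and note that the two torsion products coincide as ungraded \(\Z_{2}\)-vector spaces, giving equal mod~\(2\) Betti sums. The preliminary reduction via \Cref{rem:splitting} and the appeal to the Smith inequality are superfluous (and \Cref{rem:splitting} in fact reduces to the case where the rays span \(N_{\R}\), not to \(\tN=N\)), but this does no harm since \Cref{thm:model-toric} and \Cref{thm:iso-Tor} already cover the general case, as you yourself observe.
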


\begin{proof}
  As a consequence of \Cref{thm:model-toric}, \Cref{thm:iso-Tor} (with~\(\kk=\Z_{2}\)) carries over to real toric varieties.
  Moreover, an analogous result is valid for smooth (complex) toric varieties (even for arbitrary coefficients),
  see~\cite[Thm.~3.3.2]{Franz:2001} or~\cite[Thm.~1.2]{Franz:2006}. Of course, \(R\) and~\(\Z_{2}[\Sigma]\)
  must be graded evenly in this case.
  Nevertheless, as ungraded \(\Z_{2}\)-vector spaces we have the same torsion product both for the
  real and the complex points of a smooth toric variety, hence the same mod~\(2\) Betti sum.
\end{proof}

\begin{remark}
  The notion of maximality makes sense for any space with an involution,
  in particular for the partial quotient~\(\ZK_{\Sigma}(D^{2},S^{1})/\KKKK\) of a moment-angle complex
  by a freely acting closed subgroup~\(\KKKK\) of the ambient torus~\(\GGGG=(S^{1})^{m}\).
  \Cref{thm:maximal} extends to this case if and only if the induced map~\(\GGGG\to\GGGG/\KKKK=\LLLL\)
  restricts to a surjection~\(G\to L\) on the \(2\)-torsion elements.
  This happens if and only if \(\KKKK\) is of the form~\(\tilde\KKKK\times Q\)
  where \(\tilde\KKKK\) is a subtorus and \(Q\) finite of odd order.
\end{remark}

\end{document}